\newtheorem{theorem}{Theorem}[section]
\newtheorem{lemma}[theorem]{Lemma}
\newtheorem{corollary}[theorem]{Corollary}
\newtheorem{question}[theorem]{Question}
\theoremstyle{definition}
\newtheorem{definition}[theorem]{Definition}
\newtheorem{remark}[theorem]{Remark}
\newcommand*{\abs}[1]{\lvert#1\rvert}
\newcommand*{\concat}{\symbol{94}}
\newcommand*{\nbd}{\nobreakdash-\hspace{0pt}}
\DeclareMathOperator{\dom}{dom}
\setlist[enumerate,1]{label=(\arabic*)}          
\setlist[enumerate,2]{label=(\arabic{enumi}\alph*)} 
\setlist[enumerate,3]{label=(\arabic{enumi}\alph{enumii}\roman{enumiii})} 
\begin{document}
	
\title{Isolated d.c.e.\ degrees and \(\Sigma_1\) induction}

\author[Yiqun Liu]{Yiqun Liu}
\address[Yiqun Liu]{Office of the President\\
National University of Singapore\\
Singapore 119077\\
SINGAPORE}
\email{\href{mailto:liuyq@nus.edu.sg}{liuyq@nus.edu.sg}}

\author[Yong Liu]{Yong Liu}
\address[Yong Liu]{School of Information Engineering\\
Nanjing Xiaozhuang University\\
CHINA}
\email{\href{mailto:liuyong@njxzc.edu.cn}{liuyong@njxzc.edu.cn}}

\author[Peng]{Cheng Peng}
\address[Peng]{Institute of Mathematics\\
Hebei University of Technology\\
CHINA}
\email{\href{mailto:pengcheng@hebut.edu.cn}{pengcheng@hebut.edu.cn}}
	
\subjclass[2020]{03F30, 03D28, 03H15}

\keywords{isolated d.c.e.\ degree, reverse recursion theory, inductive strength}

\thanks{Peng's research was partially funded by the Science and
Technology Project of the Hebei Education Department (No.~QN2023009) and the National Natural Science Foundation of China (No.~12271264). Yong Liu's research was partially funded by Nanjing Xiaozhuang University (No.~2022NXY39).}
	
\begin{abstract}
A Turing degree is d.c.e.\ if it contains a set that is the difference of two c.e.~sets. A d.c.e.\ degree \(\bm{d}\) is \emph{isolated} by a c.e.\ degree \(\bm{a}<\bm{d}\) if all c.e.\ degrees that are below \(\bm{d}\) are also below \(\bm{a}\); \(\bm{d}\) is \emph{isolated from above} by a c.e.\ degree \(\bm{a}>\bm{d}\) if all c.e.\ degrees that are above \(\bm{d}\) are also above \(\bm{a}\). In this paper, we study the inductive strength of both isolated and upper isolated d.c.e.\ degrees from the point of view of reverse recursion theory. We show that
(1) \(P^{-} + B\Sigma_1 + \text{Exp} \vdash I\Sigma_1 \leftrightarrow\) There is an isolated proper d.c.e.\ degree below \(\bm{0}'\);
(2) \(P^{-} + B\Sigma_1 + \text{Exp} \vdash  I\Sigma_1 \leftrightarrow\) There is an upper isolated proper d.c.e.\ degree below \(\bm{0}'\).
\end{abstract}
	
\maketitle

\section{Introduction}

Reverse recursion theory, inspired by the program of reverse mathematics, seeks to clarify the proof-theoretic strength required to establish theorems in computability theory. Its central aim is to determine precisely which levels of induction are necessary and sufficient to prove classical recursion-theoretic results. 

Paris and Kirby~\cite{pk1978} introduced a hierarchy of fragments of Peano arithmetic (\(\mathsf{PA}\)), showing that \(I\Sigma_n\) (the induction scheme for \(\Sigma_n\) formulas) is strictly stronger than \(B\Sigma_n\) (the bounding scheme for \(\Sigma_n\) formulas), and that \(B\Sigma_{n+1}\) is strictly stronger than \(I\Sigma_n\), all over the base theory \(P^- + I\Sigma_0 + \text{Exp}\). Here, \(P^-\) denotes \(\mathsf{PA}\) without the induction scheme, and Exp asserts the totality of the exponential function \(x \mapsto 2^x\), which is provable in \(P^- + I\Sigma_1\). All standard notions from classical computability theory (such as c.e.~sets and Turing reducibility) can be formalized within \(P^- + B\Sigma_1 + \text{Exp}\).

A central focus in reverse recursion theory is analyzing the induction strength required for priority constructions. Of particular interest are finite injury priority methods, which can be classified into two types: bounded and unbounded. 
Bounded type constructions have an \emph{effective bound} on the number of times each requirement can be injured---examples include the Friedberg-Muchnik theorem and the existence of a noncomputable low c.e.\ degree. Unbounded type constructions lack such effective bounds, as seen in the Sacks Splitting Theorem.

Bounded constructions can typically be carried out in \(I\Sigma_1\) relatively straightforwardly. More sophisticated techniques allow proving the Friedberg-Muchnik theorem in \(B\Sigma_1\)~\cite{cm1992}. In contrast, Slaman and Woodin~\cite{sw1989} demonstrated that \(B\Sigma_1\) is insufficient for the Sacks Splitting Theorem. In fact, using Shore's blocking methods, Mytilinaios~\cite{Mytilinaios1989} showed that Sacks splitting theorem is equivalent to \(I\Sigma_1\) over \(P^-+B\Sigma_1+Exp\). Even a bounded type construction may be equivalent to \(I\Sigma_1\) over \(P^-+B\Sigma_1+Exp\), for example, the existence of a noncomputable low c.e.\ degree~\cite{cy2007}.

For further introduction to the subject of reverse recursion theory, we refer the reader to the excellent survey paper by Chong, Li, and Yang~\cite{cly2014bsl}.

In classical computability theory, a set is called d.c.e.\ if it is the difference of two c.e.~sets, and a Turing degree is d.c.e.\ if it contains a d.c.e.\ set. 
A d.c.e.\ degree is \emph{proper} if it contains a d.c.e.\ set but contains no c.e.\ sets. Cooper~\cite{cooper} showed the existence of a proper d.c.e.\ degree. 
The d.c.e.\ degrees form a structure different from the c.e.~degrees. One fundamental difference, shown by Cooper, Harrington, Lachlan, Lempp, and Soare~\cite{chlls}, is that there is a maximal incomplete d.c.e.\ degree, i.e., a d.c.e.\ degree \(\bm{d} < \bm{0}'\) such that there is no d.c.e.\ degree \(\bm{c}\) with \(\bm{d} < \bm{c} < \bm{0}'\). 
Thus, upward density fails in the d.c.e.\ degrees; however, downward density holds in the d.c.e.\ degrees by an observation of Lachlan. 
Though the d.c.e.\ degrees fail to be dense, Cooper and Yi~\cite{cooperyi1995} showed that the following weak density holds in the d.c.e.\ degrees: for any c.e.~degree \(\bm{a}\) and d.c.e.\ degree \(\bm{d}\) with \(\bm{a}<\bm{d}\), there is a d.c.e.\ degree \(\bm{c}\) such that \(\bm{a}<\bm{c}<\bm{d}\). 
They also showed that the requirement ``\(\bm{c}\) d.c.e.\ '' is necessary, by showing that there exists an \emph{isolated} d.c.e.\ degree~\cite{cooperyi1995}.

\begin{definition}
A d.c.e.\ degree \(\bm{d}\) is \emph{isolated} by a c.e.~degree \(\bm{a}\) if \(\bm{a} < \bm{d}\) and every c.e.~degree below \(\bm{d}\) is also below \(\bm{a}\). In this case, \(\bm{d}\) and \(\bm{a}\) form an \emph{isolation pair}, and \(\bm{d}\) is called an \emph{isolated degree}.
\end{definition} 

Ishmukhametov and Wu~\cite{iw2002} showed that the degrees in an isolation pair need not be close to each other: there is an isolation pair \(\bm{d}\) and \(\bm{a}\) such that \(\bm{a}\) is low and \(\bm{d}\) is high. 
Efremov~\cite{efremov1998} and Wu~\cite{wu2004} consider a similar notion of isolation pair in the following way.

\begin{definition}
A d.c.e.\ degree \(\bm{d}\) is \emph{isolated from above} by a c.e.~degree \(\bm{a}\) if \(\bm{d} < \bm{a}\) and every c.e.~degree above \(\bm{d}\) is also above \(\bm{a}\). In this case, \(\bm{d}\) and \(\bm{a}\) form an upper isolation pair, and \(\bm{d}\) is called an \emph{upper isolated degree}.
\end{definition}

A maximal d.c.e.\ degree \(\bm{d}\) is isolated from above by \(\bm{0}'\). Efremov~\cite{efremov1998} proved, among other results, that there exists a low d.c.e.\ degree \(\bm{d}\) which is isolated from above by \(\bm{0}'\); however, Downey and Yu~\cite{dy2004} showed that such a \(\bm{d}\) cannot be both low and maximal. 
Wu~\cite{wu2004} proved the existence of a \emph{bi-isolated degree} (one that is both isolated and upper isolated). Liu~\cite{liu2019} later improved this result by showing that there exists an isolated maximal d.c.e.\ degree.

From the perspective of reverse recursion theory, d.c.e.\ degrees become very subtle objects. In an \(I\Sigma_1\) model, they behave as one would expect; Kontostathis~\cite{kon93} observed that Cooper's proof of the existence of a proper d.c.e.\ degree can be carried out in \(I\Sigma_1\). However, in \(B\Sigma_1\) models (without \(I\Sigma_1\)), maintaining the transitivity of \(\le_T\) requires adopting a modified definition of Turing reducibility (see Definition~\ref{def:turing_reducibility}). Li~\cite{li2014} constructs a (proper) d.c.e.\ degree that does not lie below \(\bm{0}'\) in such settings. The same paper also concludes that, over the base theory \(P^{-} + B\Sigma_1 + \text{Exp}\), \(I\Sigma_1\) is equivalent to the existence of a proper d.c.e.\ degree below \(\bm{0}'\). 

In this paper, we study the existence of isolated d.c.e.\ degrees and upper isolated d.c.e.\ degrees in models of \(I\Sigma_1\). Note that an isolated d.c.e.\ degree is automatically proper because the Sacks Splitting Theorem holds in models of \(I\Sigma_1\)~\cite{Mytilinaios1989} (see Lemma~\ref{lem:isolate_proper}). However, we do not know whether an upper isolated d.c.e.\ degree is automatically proper, as there is no similar theorem known in \(I\Sigma_1\). Our main theorems are as follows.

\begin{theorem}\label{theorem:isolated}
\(P^{-} + I\Sigma_1 \vdash\) There is an isolated proper d.c.e.\ degree below \(\bm{0}'\).
\end{theorem}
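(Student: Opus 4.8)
The plan is to adapt the Cooper--Yi construction of an isolated d.c.e.\ degree~\cite{cooperyi1995} and to check that it can be carried out in $I\Sigma_1$; once an isolated d.c.e.\ degree below $\bm{0}'$ has been produced, Lemma~\ref{lem:isolate_proper} upgrades it for free to a \emph{proper} one, which is all that then remains. Working inside a model of $P^{-}+I\Sigma_1$, I would enumerate a d.c.e.\ set $D$ whose even elements, once enumerated, are never extracted --- so that $A=\{x:2x\in D\}$ is c.e.\ and $A\le_T D$ holds trivially and permanently --- while all diagonalization takes place on the odd part $B=\{x:2x+1\in D\}$. Put $\bm{a}=\deg(A)$ and $\bm{d}=\deg(D)$, so $\bm{a}\le\bm{d}\le\bm{0}'$. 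The requirements are
\[
N_e:\quad W_i=\Phi_k^{D}\ \Longrightarrow\ W_i=\Delta_e^{A}\ \text{for a Turing functional}\ \Delta_e\ \text{we build},
\]
for each $e$ coding a pair $(i,k)$, together with
\[
P_j:\quad B\ne\Psi_j^{A},
\]
for each $j$, where $\langle\Phi_k\rangle$ and $\langle\Psi_j\rangle$ list the Turing functionals. The $N_e$'s force every c.e.\ degree below $\bm{d}$ to lie below $\bm{a}$ --- any c.e.\ $W$ with $W\le_T D$ is $W=W_i=\Phi_k^{D}$ for suitable $i,k$, whence $W=\Delta_e^{A}\le_T A$ --- while the $P_j$'s force $B\not\le_T A$, hence $\bm{d}=\deg(A\oplus B)\not\le\bm{a}$; thus $\bm{a}<\bm{d}$ and $\bm{d}$ is isolated by $\bm{a}$.

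The strategies are the familiar ones. A $P_j$-strategy picks a large fresh witness $z\notin B$, waits for $\Psi_j^{A}(z){\downarrow}=0$, enumerates $z$ into $B$, and restrains $A$ below the use of that computation; if a higher-priority $N_e$ later injures this restraint, $P_j$ \emph{extracts} $z$ from $B$ (this is the only place we use that $D$ is d.c.e.) and, if forced, restarts with a fresh witness. An $N_e$-strategy assigns each $x$ a large fresh marker $\delta_e(x)$ (kept above the current restraints of the higher-priority requirements), defines $\Delta_e^{A}(x)=W_i(x)$ with use $\delta_e(x)$ whenever it sees $\Phi_k^{D}(x){\downarrow}=W_i(x)$, and enumerates $\delta_e(x)$ into $A$ when $x$ enters $W_i$, correcting $\Delta_e^{A}(x)$ to $1$; it never restrains $D$. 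Freshness of all markers and witnesses guarantees that no requirement's action injures a higher-priority requirement. The delicate point --- classical, already in~\cite{cooperyi1995} --- is that extracting $z$ from $B$ perturbs $\Phi_k^{D}$-computations, so the verification that each $\Delta_e$ is eventually built consistently (and equals $W_i$ when $W_i=\Phi_k^{D}$) must exploit that each lower-priority $P_j$ extracts from $B$ only finitely often.

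Classically this is a finite-injury argument: by induction along the priority list, every $P_j$ is injured only finitely often --- the injuries come from a higher-priority $N_e$ enumerating into $A$ one of its finitely many markers below $P_j$'s eventual restraint, or from a higher-priority $P_{j'}$ restarting (which happens finitely often) --- so $P_j$ is eventually permanently satisfied, while every $N_e$, once no longer initialised by a higher-priority $P_j$, succeeds. But the construction is of \emph{unbounded type}: there is no recursive bound on how often $P_j$ is injured, since that bound is tied to the use of $\Psi_j^{A}$. Hence the naive verification --- ``for each $n$, the $n$th requirement is injured finitely often'' --- is a $\Sigma_2$ assertion and is not available by induction inside $I\Sigma_1$; I expect this to be the main obstacle. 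I would handle it via the blocking technique of Shore, following Mytilinaios's treatment of the Sacks Splitting Theorem in $I\Sigma_1$~\cite{Mytilinaios1989}: the requirements are grouped into consecutive blocks so that convergence of the whole construction can be established by $\Sigma_1$-induction on the block index --- exploiting that the relevant outcome of a finite block of requirements is $\Sigma_1$-definable over the data produced by the earlier blocks, with $B\Sigma_1$-collection controlling the searches --- rather than by the unavailable $\Sigma_2$-induction on individual requirements. Since the construction is effective (the stage-by-stage approximation being a total $\Delta_1$ function of the stage number), the sets $A$ and $D$, with $A\le_T D\le_T\emptyset'$, then exist in the model and satisfy every requirement.

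Putting this together, $P^{-}+I\Sigma_1$ proves that there is a d.c.e.\ degree $\bm{d}\le\bm{0}'$ isolated by a c.e.\ degree $\bm{a}<\bm{d}$; by Lemma~\ref{lem:isolate_proper} this $\bm{d}$ is then proper, which establishes Theorem~\ref{theorem:isolated}. (Standard simplicity requirements on $A$ could be added to ensure $\bm{a}\ne\bm{0}$, but are not needed for the statement.)
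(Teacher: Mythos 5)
Your proposal takes a genuinely different route from the paper, and the point at which it diverges is exactly the crux of the theorem; there it has a real gap.

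You follow the classical Cooper--Yi setup: build a c.e.\ set \(A\) and a d.c.e.\ set \(D\) with \(A\le_T D\), let \(N_e\) correct a functional \(\Delta_e^A\) by enumerating fresh markers into \(A\), and let \(P_j\) diagonalize \(B\ne \Psi_j^A\) with witness extraction. You correctly observe that this is an \emph{unbounded} finite-injury construction --- the number of times a \(P_j\)-strategy can be injured is tied to the eventual use of \(\Psi_j^A\), which is not effectively bounded --- and you propose to discharge the resulting \(\Sigma_2\) verification by the Shore/Mytilinaios blocking technique. That last step is exactly what is missing: you assert blocking will work but do not carry it out, and it is not routine. The obstruction Mytilinaios handles (Sacks splitting) has preservation-type negative requirements whose block-level outcomes are simple; your \(N_e\)'s are actively building functionals whose correctness depends on whether \(\Phi_k^D=W_i\), a \(\Pi_2\) hypothesis, and the classical Cooper--Yi construction is organized as an infinite-injury tree argument with \(\Sigma_2\) outcomes for precisely that reason. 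It is not evident that blocking tames this, and the proposal gives no argument that it does. (A secondary concern: having \(P_j\) perform the extraction, rather than \(N_e\), is nonstandard; in Cooper--Yi the extraction is the \(N\)-strategy's ``disagreement'' move. Your variant is plausibly salvageable, but its correctness is not immediate and is not checked.)

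The paper avoids this entirely by designing a construction that is \emph{bounded} from the start. It works with the Lachlan set \(L(D)\) as the isolating c.e.\ degree and uses a disagreement argument in which each \(N_e\)-strategy, upon detecting an incorrect \(\Gamma_e^{L(D)}(n)\), extracts to restore an old \(\Psi^D(n)\)-computation and thereby produces a \emph{permanent} disagreement \(\Psi^D(n)\ne W(n)\); after that, \(N_e\) is done forever. Thus every strategy acts (and initializes lower priority) at most once, so by \(\Pi_1\)-induction each \(R_e\) is injured at most \(2^e-1\) times, and the \(I\Sigma_1\) verification is then a short application of Friedman's lemma on bounded \(\Sigma_1\) sets. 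In short: you propose to push the classical unbounded construction through \(I\Sigma_1\) via blocking (the hard, unproved step), whereas the paper reengineers the combinatorics so that no blocking is needed at all.
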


\begin{theorem}\label{theorem:upperisolated}
\(P^{-} + I\Sigma_1 \vdash\) There is an upper isolated proper d.c.e.\ degree below \(\bm{0}'\).
\end{theorem}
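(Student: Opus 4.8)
The plan is to adapt, inside $P^{-}+I\Sigma_1$, the classical construction of an upper isolation pair (Efremov~\cite{efremov1998}, Wu~\cite{wu2004}), using the fact---going back to Kontostathis~\cite{kon93} and Li~\cite{li2014}---that constructions of d.c.e.\ degrees formalise smoothly at the level of $I\Sigma_1$. We build a d.c.e.\ set $D$ of degree $\bm{d}$, a c.e.\ set $A$ of degree $\bm{a}$, and auxiliary partial computable functionals $\Gamma_e$, meeting for every index $e$ the requirements $N_e\colon A\neq\Phi_e^{D}$ and $G_e\colon\Phi_e^{W_e}=D\rightarrow A=\Gamma_e^{W_e}$, while maintaining globally that $D\leq_T A$ (each time a number first enters, or later leaves, $D$ we enumerate a fixed associated marker into $A$, so that an oracle for $A$ decides $D$). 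Granting all this: $\bm{a}$ is c.e.\ and $D$ is d.c.e., so $\bm{a},\bm{d}\leq\bm{0}'$; the $N_e$ give $A\not\leq_T D$, hence $\bm{d}<\bm{a}\leq\bm{0}'$; and $G_e$ says precisely that every c.e.\ degree above $\bm{d}$ is above $\bm{a}$, so $\bm{d}$ is isolated from above by $\bm{a}$. Moreover $\bm{d}$ is proper: were some c.e.\ set $W$ Turing equivalent to $D$, we could fix an index $e$ with $W_e=W$ and $\Phi_e^{W_e}=D$ (a padding argument on a joint enumeration of sets and functionals), and then $G_e$ would give $A=\Gamma_e^{W_e}\leq_T W\leq_T D$, contradicting the $N_e$; so no separate properness module is needed, even though we do not know whether upper isolation by itself forces properness.

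The strategies are the familiar ones. An $N_e$-module reserves a fresh follower $x_e$ and a fresh large number $d_e$, waits for $\Phi_e^{D}(x_e)\!\downarrow\,=0$, and then enumerates $x_e$ into $A$ and $d_e$ into $D$. Putting $x_e$ into $A$ injures every lower-priority $\Gamma_i$, since $\Gamma_i^{W_i}(x_e)$ had value $0$; consistency is restored using the d.c.e.\ flexibility of $D$. One keeps the use $\gamma_i(x_e)$ equal to the current use $\varphi_i(d_e)$ of $\Phi_i^{W_i}(d_e)$, so that when $d_e$ enters $D$ and---assuming $\Phi_i^{W_i}=D$---$W_i$ changes below $\varphi_i(d_e)$ to track it, the computation $\Gamma_i^{W_i}(x_e)$ is released and can be redefined to $1$; the markers that code $D$ into $A$ are handled in the same way. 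The $N$-modules injure one another only in the bounded manner typical of finite injury: if a higher-priority $N_k$ later acts with its number $d_k$ lying below $\varphi_e(x_e)$, then $x_e$ is cancelled, $d_e$ is extracted from $D$---this is where $D$ genuinely behaves like a d.c.e.\ set---and $N_e$ restarts with fresh numbers. A routine count bounds the number of actions of $N_e$ recursively in terms of the numbers of actions of the $N_k$, $k<e$.

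The substance of the theorem is checking that all of this goes through in $I\Sigma_1$. That each $N_e$ acts finitely often and reaches a final state is proved by $\Pi_1$-induction on $e$---available from $I\Sigma_1$---since ``$N_e$ acts at most $f(e)$ times'' is $\Pi_1$ for the explicit $\Sigma_1$-definable bound $f$ given by the recursion of the previous paragraph; and convergence of the $N$-modules stabilises $D$ and $A$ below every fixed bound. The step I expect to be the main obstacle is showing, uniformly in $e$---with nonstandard $e$ included, so that the ``for all $e$'' hidden in the $G_e$ is genuinely met---that $\Phi_e^{W_e}=D$ implies $\Gamma_e^{W_e}$ is total, consistent, and equal to $A$. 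Consistency follows from the bookkeeping convention of the previous paragraph together with the hypothesis $\Phi_e^{W_e}=D$, since then every redefinition of a value of $\Gamma_e^{W_e}$ is preceded by a genuine $W_e$-change; the delicate point is totality, i.e.\ that $\lim_s\gamma_e(x,s)$ exists for each $x$, which depends on the stabilisation of the use $\varphi_e(d_e,s)$ and on the prior convergence of the higher-priority modules and of $A$ and $W_e$ below the relevant bounds. Each of these ingredients is $\Sigma_1$ or $\Pi_1$, but they have to be packaged into a single inductive statement---for example, ``there is a stage after which the configuration of all requirements of priority below $n$ is final''---so that the argument stays within $\Sigma_1$-induction and does not covertly appeal to $I\Sigma_2$. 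Assembling this bookkeeping correctly and uniformly in $e$ is the technical heart of the proof; the rest is routine finite-injury verification adapted to the fragment.
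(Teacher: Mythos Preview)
Your plan differs from the paper's in a basic way. You build a fresh c.e.\ set \(A\) and diagonalize \(A\neq\Phi_e^D\); the paper instead makes \(D\) \emph{low} and isolates it from above by \(\bm{0}'\) itself, with requirements \(L_e\colon D'(e)=\lim_s\Delta(e,s)\) and \(R_e\colon D=\Phi_e^{W_e}\to K=\Gamma^{W_e}\). Your observation that properness comes for free (if \(\bm{d}\) were c.e.\ then \(\bm{d}\ge\bm{d}\) forces \(\bm{d}\ge\bm{a}\), contradicting \(\bm{d}<\bm{a}\)) is correct and in fact cleaner than the paper's treatment, which adds Cooper's properness module separately.

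The gap is at the point you yourself flag as ``the technical heart''. You write that a ``routine count'' bounds the actions of \(N_e\) and that assembling the bookkeeping for totality of \(\Gamma_e\) is the main obstacle---and then stop. But this is exactly the content of the theorem: the classical upper-isolation construction you cite (Wu~\cite{wu2004}) is presented as an \emph{infinite injury} tree argument with \(\Pi_2\) outcomes on the \(G\)-nodes, and the paper states explicitly that its contribution is to replace this by a bounded finite-injury construction. The paper's mechanism is not a refinement of the strategy you describe; it is structurally different. Each lowness node \(L_e\) maintains a function \(\Upsilon_e\) recording, for every computation \(\sigma\) it wants to protect, the set of higher \(R_i\) whose agitators \(d_{i,2(e-i)}\) still threaten \(\sigma\); the node \emph{pre-emptively enumerates} those agitators when it finds \(\sigma\), and one proves \(\lvert\dom\Upsilon_e\rvert\le 2^e\) and derives explicit recursive bounds on all initialization counts. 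Nothing in your sketch supplies an analog of this. In particular, your single element \(d_e\) is asked to lift \(\gamma_i(x_e)\) simultaneously for every higher-priority \(G_i\), which presupposes that each such \(G_i\) has already linked \(\gamma_i(x_e)\) to \(\varphi_i(d_e)\)---yet \(x_e,d_e\) are chosen by the \emph{lower}-priority \(N_e\), so the coordination needed is more than ``routine''. Your proposal identifies the right difficulty but does not contain the idea that resolves it; without a concrete bounded-injury mechanism the verification cannot be confined to \(I\Sigma_1\).
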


As \(I\Sigma_1\) is equivalent to the existence of a proper d.c.e.\ degree below \(\bm{0}'\) over the base theory \(P^{-} + B\Sigma_1 + \text{Exp}\)~\cite{li2014}, our theorems can be rephrased as 
\begin{corollary}\label{cor:mainresult}
\begin{enumerate}
    \item \(P^{-} + B\Sigma_1 + \text{Exp} \vdash I\Sigma_1 \leftrightarrow\) There exists an isolated proper d.c.e.\ degree below \(\bm{0}'\).
    \item \(P^{-} + B\Sigma_1 + \text{Exp} \vdash I\Sigma_1 \leftrightarrow\) There exists an upper isolated proper d.c.e.\ degree below \(\bm{0}'\).\qed{}
\end{enumerate}
\end{corollary}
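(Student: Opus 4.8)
The plan is to read the corollary off directly from Theorems~\ref{theorem:isolated} and~\ref{theorem:upperisolated} together with Li's equivalence \cite{li2014}, namely that \(P^{-} + B\Sigma_1 + \text{Exp} \vdash I\Sigma_1 \leftrightarrow\) ``there is a proper d.c.e.\ degree below \(\bm 0'\)''. Both biconditionals in the corollary will be obtained by proving the two implications separately, working throughout over the base theory \(P^{-} + B\Sigma_1 + \text{Exp}\).

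For the forward direction of (1) I would assume \(I\Sigma_1\). Since \(P^{-} + I\Sigma_1\) proves both \(\text{Exp}\) and \(B\Sigma_1\), Theorem~\ref{theorem:isolated} is available and produces an isolated proper d.c.e.\ degree below \(\bm 0'\). For the converse, suppose such a degree \(\bm d\) exists; then, trivially and without any use of induction, \(\bm d\) is a proper d.c.e.\ degree below \(\bm 0'\), so Li's theorem yields \(I\Sigma_1\). This establishes (1), and (2) is proved the same way, with Theorem~\ref{theorem:upperisolated} replacing Theorem~\ref{theorem:isolated}: an upper isolated proper d.c.e.\ degree below \(\bm 0'\) is in particular a proper d.c.e.\ degree below \(\bm 0'\), so Li's equivalence again delivers \(I\Sigma_1\).

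Consequently the corollary carries no independent combinatorial content; everything is deferred to Theorems~\ref{theorem:isolated} and~\ref{theorem:upperisolated}. The real difficulty is therefore upstream, in those theorems, where one must carry out finite-injury priority constructions---simultaneously building a d.c.e.\ set, a c.e.\ isolating (resp.\ upper-isolating) set, and meeting the coding and isolation requirements---inside an arbitrary model of \(P^{-} + I\Sigma_1\), checking that the relevant injury sets are \(\Sigma_1\) and suitably bounded so that \(\Sigma_1\) induction suffices both to verify each requirement and to guarantee the stabilization of the functionals involved. The only point that needs care in the assembly of the corollary itself is that the phrase ``proper d.c.e.\ degree below \(\bm 0'\)'' in Li's theorem refers to exactly the same notion---in particular the modified Turing reducibility of Definition~\ref{def:turing_reducibility}---as in our theorems, so that the weakening ``isolated (or upper isolated) \(\Rightarrow\) proper d.c.e.\ below \(\bm 0'\)'' matches Li's hypothesis on the nose.
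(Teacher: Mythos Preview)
Your proposal is correct and matches the paper's own treatment exactly: the corollary is stated with a \qed{} and no further argument, since the forward directions are Theorems~\ref{theorem:isolated} and~\ref{theorem:upperisolated} and the reverse directions follow from Li's equivalence~\cite{li2014} by dropping the word ``isolated'' (resp.\ ``upper isolated''). Your remarks on where the real work lies and on the consistency of the notion of Turing reducibility are apt but go beyond what the paper itself says.
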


The classical constructions for both isolated pairs and upper isolated pairs are presented as infinite injury priority methods. In this paper, we present finite injury constructions of the ``bounded'' type for both isolated pairs and upper isolated pairs. The construction for Theorem~\ref{theorem:isolated} is achieved by carefully analyzing the conflicts between requirements, while the construction for Theorem~\ref{theorem:upperisolated} is inspired by certain aspects of Shore's work~\cite{shore} on the priority method.

This paper is organized as follows. In Section~\ref{sec:preliminary}, we recall some notions and theorems that will be used in this paper. In Section~\ref{sec:isolated}, we prove Theorem~\ref{theorem:isolated}. In Section~\ref{sec:upperisolated}, we prove Theorem~\ref{theorem:upperisolated}. Finally, in Section~\ref{sec:oq}, we conclude with some further open questions.

\section{Preliminaries}\label{sec:preliminary}

\subsection{The inductive hierarchy}

We work in the language of first-order arithmetic \(\mathcal{L}(0, 1, +, \times, \text{exp}, <)\), where exp denotes the exponential function \(x\mapsto 2^x\). Here, \(P^-\) denotes the axioms of Peano arithmetic without induction, and Exp denotes the axiom \(\forall x \exists y (y = \text{exp}(x))\). \(\Sigma_0=\Pi_0\) formulas are those that contain only bounded quantifiers. A \(\Sigma_{n+1}\) formula has the form \(\exists \vec{x} \varphi\) for some \(\Pi_n\) formula \(\varphi\), and a \(\Pi_{n+1}\) formula has the form \(\forall \vec{x} \psi\) for some \(\Sigma_n\) formula \(\psi\). A \(\Delta_n\) formula is a \(\Sigma_n\) formula that is provably equivalent to some \(\Pi_n\) formula over a base theory.

If \(\Gamma\) is a set of formulas, \(I\Gamma\) is the \emph{induction scheme} for elements of \(\Gamma\). It consists the universal closure of the formulas
\[
(\theta (0) \wedge \forall x [\theta(x) \rightarrow \theta(x+1)]) \rightarrow \forall x\theta(x)
\]
for \(\theta\) in \(\Gamma\). \(B\Gamma\) is the \emph{bounding scheme} for elements of \(\Gamma\). It consists the universal closure of the formulas 
\[
\forall y(\forall x < y\exists z \theta(x,z) \rightarrow \exists t\forall x < y\exists z < t\theta(x,z))
\]
for \(\theta\) in \(\Gamma\). 

\begin{theorem}
(Kirby and Paris~\cite{kirby1992}) Work in \(PA^-+I\Sigma_0\). For all \(n \geq 1\),
\begin{enumerate}
    \item \(I\Sigma_n \Leftrightarrow I\Pi_n\)
    
    \item \(B\Sigma_{n+1} \Leftrightarrow B\Pi_n\)
    
    \item \(I\Sigma_{n+1} \Rightarrow B\Sigma_{n+1} \Rightarrow I\Sigma_n\), and the implications are strict.
\end{enumerate}
(Slaman~\cite{slaman2004}) Work in \(P^-+I\Sigma_0+Exp\). For all \(n\ge 1\),
\begin{enumerate}[resume]
    \item \(B\Sigma_n\Leftrightarrow I\Delta_n\).
\end{enumerate}
\end{theorem}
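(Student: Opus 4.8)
These four parts are classical --- (1)--(3) are due to Paris and Kirby and (4) to Slaman --- so a fully self-contained treatment would simply cite \cite{pk1978,kirby1992,slaman2004}; here is the shape of a reconstruction, with the main obstacles flagged.

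For part (1) I would route everything through the least number principles $L\Sigma_n$ and $L\Pi_n$ (``every nonempty $\Sigma_n$\nbd, resp.\ $\Pi_n$\nbd, definable set has a least element''), proving the cycle $I\Sigma_n\Rightarrow L\Sigma_n\Rightarrow I\Pi_n\Rightarrow L\Pi_n\Rightarrow I\Sigma_n$ and its mirror image. Each step $L\Gamma\Rightarrow I\Gamma'$ (for $\Gamma'$ the dual class) is the ``least counterexample'' argument: a minimal point at which the inductive formula fails must be a successor, and the induction step is violated at its predecessor. Each step $I\Gamma\Rightarrow L\Gamma$ uses the trick of replacing a set $A$ by the formula ``$[0,x]\cap A\neq\emptyset$'' and inducting on that; the only subtlety is the closure of $\Sigma_n$ and $\Pi_n$ under bounded quantification, which over $P^-+I\Sigma_0$ is available for bounded existential (resp.\ universal) quantifiers by $\Sigma_0$\nbd coding of finite sequences, and in the sharper form actually needed (bounded-$\forall$ of $\Sigma_n$) follows from $B\Sigma_n$, which is already at hand under the hypothesis $I\Sigma_n$. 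Part (2) is the same idea but lighter: $B\Sigma_{n+1}\Rightarrow B\Pi_n$ is immediate since $\Pi_n\subseteq\Sigma_{n+1}$ syntactically, and $B\Pi_n\Rightarrow B\Sigma_{n+1}$ follows by rewriting a $\Sigma_{n+1}$ matrix $\exists w\,\pi(x,z,w)$ as $\exists\langle z,w\rangle\,\pi$ and collecting the pairs, using only the pairing function.

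For part (3), $I\Sigma_{n+1}\Rightarrow B\Sigma_{n+1}$ is a short induction: it suffices to prove $B\Pi_n$, and for a hypothesis $\forall x<a\,\exists y\,\pi(x,y)$ with $\pi\in\Pi_n$ one inducts on $z\le a$ on the $\Sigma_{n+1}$ statement ``$\exists t\,\forall x<z\,\exists y<t\,\pi(x,y)$'', the bounded prefix staying at level $\Pi_n$ thanks to the $\Sigma_n$\nbd collection available at the lower level. The implication $B\Sigma_{n+1}\Rightarrow I\Sigma_n$ is the standard argument that witness-collection lowers quantifier complexity: from a failing instance of $\Sigma_n$\nbd induction with counterexample at $a$, write the $\Sigma_n$ formula as $\exists w\,\eta$ with $\eta\in\Pi_{n-1}$, apply $B\Pi_n$ (equivalently $B\Sigma_{n+1}$, by part (2)) to bound the relevant witnesses below the threshold by a single $t$, observe that $\exists w<t\,\eta$ is $\Pi_{n-1}$, and so obtain a failure of $\Pi_{n-1}$\nbd (equivalently $\Sigma_{n-1}$\nbd) induction on $[0,a]$ --- contradicting the lower-index case, which is handled by an outer induction on $n$ whose base $n=1$ rests on $I\Sigma_0$. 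I expect the genuine obstacle in the whole theorem to be the \emph{strictness} of these two implications, which is not provable proof-theoretically and requires model constructions: a model of $B\Sigma_{n+1}+\neg I\Sigma_{n+1}$ is obtained as a suitable initial segment of a model of $I\Sigma_{n+1}$ (a ``semiregular but not regular'' cut), and a model of $I\Sigma_n+\neg B\Sigma_{n+1}$ by exhibiting, inside a model of $I\Sigma_n$, a $\Pi_n$\nbd definable function that maps a bounded interval cofinally --- both by the cut technology of Paris and Kirby.

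For part (4), $B\Sigma_n\Rightarrow I\Delta_n$ follows the template above, since a $\Delta_n$ predicate has both a $\Sigma_n$ and a $\Pi_n$ presentation and $B\Sigma_n$ supplies the least number principles needed for the least-counterexample argument. The content is in $I\Delta_n\Rightarrow B\Sigma_n$, Slaman's theorem, and this is where $\mathrm{Exp}$ is essential. I would argue contrapositively: from a $\Sigma_n$\nbd definable function witnessing a failure of $B\Sigma_n$ --- cofinal on a bounded domain --- use $\mathrm{Exp}$ to code bounded initial pieces of its graph and thereby define a $\Delta_n$ object (e.g.\ a set with no least element, or a $\Delta_n$ formula on which induction fails), contradicting $I\Delta_n$. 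The delicate point, and the reason $\mathrm{Exp}$ cannot be dropped, is arranging that the coded object is genuinely $\Delta_n$ and not merely $\Sigma_n$: one needs enough coding strength to also bound the searches on the $\Pi_n$ side. Here I would follow Slaman's argument rather than attempt a shortcut.
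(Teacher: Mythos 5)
The paper does not prove this theorem; it is stated purely as cited background (Kirby--Paris for (1)--(3), Slaman for (4)), so there is no in-paper argument to compare against. Your sketch follows the standard textbook route --- least-number principles and the bounded-quantifier closure subtleties for (1), pairing for (2), witness collection plus an outer induction on \(n\) for (3), Paris--Kirby cuts for strictness, and deferral to Slaman for \(I\Delta_n\Rightarrow B\Sigma_n\) --- and is essentially correct as an outline, with the genuinely hard parts honestly flagged rather than glossed. One caveat: your parenthetical claim that \(\mathrm{Exp}\) ``cannot be dropped'' in part (4) overstates what is known; Slaman's proof \emph{uses} \(\mathrm{Exp}\), but whether \(I\Delta_n\vdash B\Sigma_n\) over a weaker base (e.g.\ whether \(I\Delta_1\) alone implies \(B\Sigma_1\)) is a well-known open problem, so you should say only that \(\mathrm{Exp}\) is used essentially in the known proof, not that it is necessary.
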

These results demonstrate the inductive hierarchy.


\subsection{Finiteness}
Let \(\mathcal{M}\) be a model of \(P^- + I\Sigma_0 + \text{Exp}\), and let \(M\) be the domain of \(\mathcal{M}\). When \(\mathcal{M}\) is nonstandard, finiteness is defined in terms of codability.
\begin{definition}
\begin{enumerate}
    \item A set \(A\subseteq M\) is \textbf{bounded} if there exists some \(b \in M\) such that \(\forall x\in A~(x< b)\).
    \item A set \(A\subseteq M\) is \(\mathcal{M}\)\textbf{-finite} if it is coded by some \(c\in M\), i.e., for any \(i\), \(i\in A\) if and only if the \(i\)-th digit in the binary expansion of \(c\) is 1.
\end{enumerate}
\end{definition}

\(\mathcal{M}\)-finite sets are bounded in \(\mathcal{M}\), but the converse is false since \(\omega\) is bounded but not \(\mathcal{M}\)-finite in nonstandard models.
The following lemma is well-known.
\begin{lemma}[H. Friedman]\label{lem:Friedman}
Let \(n \geq 1\) and \(\mathcal{M} \models P^- + I\Sigma_n\). Then every bounded \(\Sigma_n\) set is \(\mathcal{M}\)-finite, and every partial \(\Sigma_n\) function maps a bounded set to a bounded set.
\end{lemma}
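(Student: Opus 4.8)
The plan is to prove the two assertions in order, with the second reduced to the first. Throughout I would freely use the standard facts about $I\Sigma_n$ over $P^-+I\Sigma_0+\text{Exp}$: it proves $B\Sigma_m$ for every $m\le n$, the least-number principle for $\Sigma_n$ and $\Pi_n$ formulas, $\text{Exp}$ (since $n\ge 1$), and — by an induction on $n$ using the available bounding principles — the closure of the classes $\Sigma_n$ and $\Pi_n$ under bounded quantification; these I would cite rather than reprove.

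For the first assertion, fix $A=\{x:\varphi(x)\}$ with $\varphi\in\Sigma_n$ and $A\subseteq[0,b)$. The naive approach — assembling the binary code of $A$ by recursion on the bound — would require induction on the statement ``$A\cap[0,i)$ is coded'', which is properly $\Sigma_{n+1}$ and hence out of reach; so instead I would obtain the code as a maximal partial code. Let
\[
C=\{\,c<2^{b}:\forall x<b\,(x\in c\to\varphi(x))\,\},
\]
where ``$x\in c$'' is the $\Sigma_0$ predicate ``the $x$-th binary digit of $c$ is $1$''; by closure of $\Sigma_n$ under bounded quantification, ``$c\in C$'' is $\Sigma_n$, and $0\in C$, so $C\ne\emptyset$. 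Next I claim $C$ has a greatest element: the set of strict upper bounds of $C$, namely $E=\{\,e:\forall c\,(e\le c\to c\notin C)\,\}$, is $\Pi_n$-definable and contains $2^{b}$, so by the $\Pi_n$ least-number principle it has a least element $e_0$; since $0\in C$ we have $e_0\ge 1$, and combining ``$e_0-1\notin E$'' with ``$e_0\in E$'' shows that $c_0:=e_0-1$ lies in $C$ and bounds $C$, i.e.\ $c_0=\max C$. Finally $c_0$ codes $A$ exactly: it codes a subset of $A$ since $c_0\in C$; and if some $x_0\in A$ had $x_0\notin c_0$, then (using $x_0<b$) the number $c_0+2^{x_0}$ would still be below $2^{b}$, would lie in $C$, and would exceed $c_0$, contradicting maximality. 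Hence $A$ is coded, i.e.\ $\mathcal{M}$-finite.

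For the second assertion, let $F$ be a partial $\Sigma_n$ function with graph $\psi(x,y)\in\Sigma_n$; since any bounded set is contained in some $[0,b)$, it suffices to bound $\{\,F(x):x<b,\ x\in\dom F\,\}$. The set $D:=\{\,x<b:\exists y\,\psi(x,y)\,\}$ is bounded and $\Sigma_n$, so by the first assertion it is coded by some $d\in M$ — the point being that ``$x\in D$'' is now equivalent to the $\Sigma_0$ predicate ``$x\in d$''. The statement $\forall x<b\,\exists y\,(x\notin d\vee\psi(x,y))$ is then true (take any $y$ if $x\notin d$, and a witness to $F(x)\down$ if $x\in d$) with $\Sigma_n$ matrix, so $B\Sigma_n$ supplies a $t$ with $\forall x<b\,(x\in d\to\exists y<t\,\psi(x,y))$; since $\psi$ is the graph of $F$, this says $F(x)<t$ for every $x<b$ with $x\in\dom F$, as required.

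The hard part is the first assertion, and specifically the observation that one must avoid the recursive construction of the code: it is precisely the passage to the maximal element of $C$ via the $\Pi_n$ least-number principle that keeps every formula involved at the $\Sigma_n/\Pi_n$ level and thus within $I\Sigma_n$. Once that is in hand the second assertion is a short application of $\Sigma_n$-collection, whose only subtlety is that one first replaces the $\Sigma_n$-defined domain $D$ by its code, so that collection is applied to a genuinely $\Sigma_n$ matrix rather than to something of higher complexity.
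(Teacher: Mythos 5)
The paper states this lemma without proof, citing it as well-known, so there is no in-paper argument to compare against; your proof is correct and is essentially the standard one (as in H\'ajek--Pudl\'ak). Both halves check out: the maximal partial code \(c_0=\max C\) exists via the \(\Pi_n\) least-number principle (available since \(I\Sigma_n\leftrightarrow I\Pi_n\leftrightarrow L\Pi_n\)) and codes exactly \(A\) by the flip-a-digit maximality argument, and your reduction of the second assertion to the first---replacing the \(\Sigma_n\)-defined domain by its code so that \(B\Sigma_n\) is applied to a genuinely \(\Sigma_n\) matrix rather than the \(\Pi_n\vee\Sigma_n\) matrix one would otherwise face---is precisely the right move.
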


\subsection{Turing reducibility}\label{sec:turing}
Let \(\mathcal{M}\) be a model of \(P^- + I\Sigma_0 + \text{Exp}\). We usually identify an \(\mathcal{M}\)-finite set with its code in \(\mathcal{M}\) (e.g., in the following definition of Turing functional and Turing reducibility involving quadruples). 
\begin{definition}\label{def:turing_reducibility}
 The basic recursion theoretic notions are the following.
\begin{enumerate}
\item A set \(A\subseteq M\) is \textbf{c.e.}~if it is \(\Sigma_1\) definable (with parameters from \(M\)) in \(\mathcal{M}\). 

\item A set \(A\subseteq M\) is \textbf{computable} if \(A\) and its complement \(\bar{A}\) are both c.e. 

\item A \textbf{Turing functional} is a c.e.~set \(\Phi\) of quadruples \(\langle X, y, P, N \rangle\), where \(X, P, N\) are \(\mathcal{M}\)-finite sets, \(y\) is a number, and
\begin{enumerate}
\item[(i)] \(\langle X, y, P, N \rangle \in \Phi \rightarrow P \cap N =\emptyset\),
\item[(ii)] \(( \langle X, y, P, N \rangle \in \Phi \wedge P' \cap N' = \emptyset \wedge P \subseteq P' \wedge N \subseteq N' \wedge X' \subseteq X) \rightarrow \langle X', y, P', N' \rangle \in \Phi\), 
\item[(iii)] \((\langle X, y, P, N \rangle , \langle X, y', P, N \rangle \in \Phi) \rightarrow y = y'\).
\end{enumerate}

\item Let \(A, B \subseteq M\). \(A\) is \textbf{Turing reducible} to \(B\) (or ``\(A\) is \textbf{computable in} \(B\)''), denoted by \(A \leq_T B\), if there is a Turing functional \(\Phi\) such that for any \(\mathcal{M}\)-finite set \(X\), 
\begin{enumerate}
\item[(i)] \(X \subseteq A \Longleftrightarrow  \exists P \subseteq B \exists N \subseteq \bar{B} \langle X, 1, P, N \rangle \in \Phi\),
\item[(ii)] \(X \subseteq \bar{A} \Longleftrightarrow \exists P \subseteq B \exists N \subseteq \bar{B}  \langle X, 0, P, N \rangle \in \Phi\).
\end{enumerate}
\end{enumerate}
\end{definition}

\begin{remark}
(1) For a c.e.~set \(A\), let \(A_s \subseteq A\) be the collection of numbers enumerated into \(A\) by stage \(s\). If \(\mathcal{M}\) satisfies \(P^- + I\Sigma_1\), then, by Lemma~\ref{lem:Friedman}, \(A_s\) is \(\mathcal{M}\)-finite for any \(s\). 
(2) If \(A\leq_T B\) via \(\Phi\), then we denote it as \(A = \Phi^B\).
\end{remark}

Intuitively, for a Turing functional \(\Phi\), \(\langle X, y, P, N \rangle \in \Phi\) means the program \(\Phi\) with input \(X\) produces output \(y\), whenever \(P\) is some positive part of the oracle and \(N\) is some negative part of the oracle.
The definition of Turing reducibility is a setwise reduction, that is, the reduction procedure is designed to answer questions about \(\mathcal{M}\)-finite sets \(X\), rather than individual numbers \(x\), with the help of an oracle. The reason is that the pointwise Turing reduction (which is obtained by substituting a number ``\(x\)'' for an \(\mathcal{M}\)-finite set ``\(X\)'' in the definition) is not transitive.

In the rest of the paper, we write the construction in a classical way, i.e., with respect to the pointwise reducibility. Whenever we show that one set \(A\)
computes another \(B\), we produce a reduction of \(B\) to \(A\) with an especially strong property, that is, initial segments of \(B\) are reduced to initial segments of \(A\).
Thus, in our construction, the Turing functional \(\Gamma\) is defined on an initial segment of \(B\) at any stage \(s\); this ensures that \(\Gamma^A = B\) in the sense of setwise reducibility.



\section{Proof of Theorem~\ref{theorem:isolated}}\label{sec:isolated}
We will construct an isolated d.c.e.\ degree using a finite injury argument of the ``bounded'' type. We can omit explicit requirements for properness and being below \(\bm{0}'\) due to the following lemmas:

\begin{lemma}\label{lem:isolate_proper}
    Over \(P^-+I\Sigma_1\), every isolated d.c.e.\ degree is proper.
\end{lemma}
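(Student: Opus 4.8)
The plan is to argue by contrapositive. Suppose $\bm{d}$ is a d.c.e. degree that is *not* proper, i.e., $\bm{d}$ contains a c.e. set and hence is itself a c.e. degree. I will show $\bm{d}$ cannot be isolated by any c.e. degree $\bm{a} < \bm{d}$. So suppose toward a contradiction that $\bm{a} < \bm{d}$ isolates $\bm{d}$; then in particular $\bm{d} > \bm{0}$, so $\bm{d}$ is a nonzero c.e. degree. The key tool is the Sacks Splitting Theorem, which by Mytilinaios~\cite{Mytilinaios1989} holds in models of $P^- + I\Sigma_1$: given a noncomputable c.e. set $D \in \bm{d}$, there are c.e. sets $D_0, D_1$ with $D = D_0 \sqcup D_1$, $D \equiv_T D_0 \oplus D_1$, and $D \not\le_T D_0$, $D \not\le_T D_1$. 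Hence $\bm{d}_0 := \deg(D_0)$ and $\bm{d}_1 := \deg(D_1)$ are c.e. degrees with $\bm{d}_0, \bm{d}_1 \le \bm{d}$ and $\bm{d}_0, \bm{d}_1 < \bm{d}$ (strictly, since $D \not\le_T D_i$), while $\bm{d}_0 \cup \bm{d}_1 = \bm{d}$.

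Now I invoke the isolation hypothesis: since $\bm{d}_0$ and $\bm{d}_1$ are c.e. degrees below $\bm{d}$, isolation forces $\bm{d}_0 \le \bm{a}$ and $\bm{d}_1 \le \bm{a}$. But then $\bm{d} = \bm{d}_0 \cup \bm{d}_1 \le \bm{a}$, contradicting $\bm{a} < \bm{d}$. This gives the contradiction, so no c.e. degree $\bm{a}$ can isolate $\bm{d}$, and therefore every isolated d.c.e. degree must be proper. One should double-check the edge case where $\bm{d}$ itself is $\bm{0}$: but an isolated degree satisfies $\bm{a} < \bm{d}$ for some $\bm{a}$, so $\bm{d} \ne \bm{0}$ automatically, and the noncomputable representative $D$ needed to apply Sacks splitting exists.

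The main obstacle — and the only point requiring genuine care in the formalized setting — is making sure that the objects produced by Sacks splitting behave correctly under the *setwise* notion of Turing reducibility from Definition~\ref{def:turing_reducibility}, and that join of degrees ($\bm{d}_0 \cup \bm{d}_1 = \deg(D_0 \oplus D_1)$) is well-behaved in $P^- + I\Sigma_1$. Since $I\Sigma_1$ holds, Lemma~\ref{lem:Friedman} guarantees that the stagewise approximations $D_s, D_{0,s}, D_{1,s}$ are $\mathcal{M}$-finite, so the Mytilinaios version of Sacks splitting applies verbatim and the reductions witnessing $D_i \le_T D$ and $D \le_T D_0 \oplus D_1$ are legitimate setwise reductions; the join and the transitivity of $\le_T$ then go through as in the classical argument. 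With these formal points in hand, the degree-theoretic manipulation above is routine.
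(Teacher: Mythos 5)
Your proposal is correct and is essentially the paper's own argument: both apply the Mytilinaios version of the Sacks Splitting Theorem (valid under \(I\Sigma_1\)) to a putative c.e.\ representative \(D\) of \(\bm{d}\), use isolation to push both halves below \(\bm{a}\), and derive \(D \equiv_T D_0\oplus D_1 \le_T A <_T D\), a contradiction. Your extra remarks on the strictness \(\bm{d}_i<\bm{d}\), the edge case \(\bm{d}=\bm{0}\), and the setwise reducibility are fine but do not change the substance.
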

\begin{proof}
    Let \(\bm{d}\) be a d.c.e.\ degree isolated by \(\bm{a}=\deg(A)\), where \(A\) is c.e.\ Suppose for contradiction that \(\bm{d}\) is c.e.\@, witnessed by some c.e.\ set \(D\). By the Sacks Splitting Theorem in \(I\Sigma_1\) models~\cite{Mytilinaios1989}, we can split \(D\) into c.e.\ sets \(D_0\) and \(D_1\) such that
    \[
        D \equiv_T D_0 \oplus D_1 \leq_T A <_T D.
    \]
    This yields the contradiction \(D <_T D\).
\end{proof}

The following lemma is also well-known (see Li~\cite{li2014} for reference).
\begin{lemma}\label{lem:below_zeroprime}
    Over \(P^-+I\Sigma_1\), every d.c.e.\ degree is below \(\bm{0}'\). \qed{}
\end{lemma}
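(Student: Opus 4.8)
The plan is to prove the lemma directly, by showing that every d.c.e.\ set $D$ is Turing reducible to $\emptyset'$, so that $\deg(D)\le\bm{0}'$; the reduction will have the strong "initial segment'' property demanded by the conventions of Section~\ref{sec:turing}. Write $D=A\setminus B$ with $A,B$ c.e.\ (and, without loss of generality, $B\subseteq A$). The underlying idea is the classical one: an oracle for $\emptyset'$ uniformly decides membership in every c.e.\ set, hence decides ``$x\in A$'' and ``$x\in B$'', hence decides ``$x\in D$''. I would carry this out in two steps and then invoke transitivity of (setwise) $\le_T$.

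Step~1: $A\oplus B\le_T\emptyset'$. Since $\emptyset'$ is $\Sigma_1$\nbd complete --- a fact formalizable already in $P^-+B\Sigma_1+\text{Exp}$ --- there are total recursive functions $f_A,f_B$ with $x\in A\leftrightarrow f_A(x)\in\emptyset'$ and $x\in B\leftrightarrow f_B(x)\in\emptyset'$, and we may arrange $f_A,f_B$ to have disjoint ranges. These give a many-one reduction of $A\oplus B$ to $\emptyset'$, and hence an essentially trivial Turing functional witnessing $A\oplus B\le_T\emptyset'$. The only finiteness fact required here --- that the image of an $\mathcal{M}$-finite set under a total recursive function is again $\mathcal{M}$-finite --- follows from $B\Sigma_1+\text{Exp}$, since collection bounds the image, which is then $\Sigma_0$\nbd definable and bounded, hence coded.

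Step~2: $D\le_T A\oplus B$. I would build a Turing functional $\Psi$ whose axioms for the answer ``$x\in D$'' place $\langle x,0\rangle$ in the positive part of the oracle (coding $x\in A$) and $\langle x,1\rangle$ in the negative part (coding $x\notin B$), and whose axioms for ``$x\notin D$'' split the input set $X$ as a disjoint union $X_0\sqcup X_1$, placing $\langle x,0\rangle$ in the negative part for $x\in X_0$ (coding $x\notin A$) and $\langle x,1\rangle$ in the positive part for $x\in X_1$ (coding $x\in B$). Because these two kinds of axioms make contradictory demands on the oracle at the coordinates $\langle x,0\rangle$ and $\langle x,1\rangle$, no quadruple can be consistent with both output values, so clause~(iii) of Definition~\ref{def:turing_reducibility} holds automatically. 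In verifying that $\Psi^{A\oplus B}=D$, the only nonroutine point is the direction ``$X\subseteq\overline D\Rightarrow$ there is a witnessing axiom'': given an $\mathcal{M}$-finite $X\subseteq\overline D$ one must exhibit the splitting, i.e.\ one needs $X\cap B$ to be $\mathcal{M}$-finite. This is precisely where $I\Sigma_1$ enters: $X\cap B$ is a bounded $\Sigma_1$ set, hence $\mathcal{M}$-finite by Lemma~\ref{lem:Friedman}. The construction plainly reduces initial segments of $D$ to initial segments of $A\oplus B$. Combining the two steps with transitivity of $\le_T$ gives $D\le_T\emptyset'$, whence $\deg(D)\le\bm{0}'$.

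The main --- and essentially only --- obstacle is the innocuous-looking $\mathcal{M}$-finiteness of $X\cap B$ in Step~2: there is no way around using $I\Sigma_1$ there, and indeed the lemma fails over $P^-+B\Sigma_1+\text{Exp}$ alone, by Li's construction~\cite{li2014} of a d.c.e.\ degree not below $\bm{0}'$. One could alternatively observe that both $D$ and $\overline D$ are of the form $\Sigma_1\wedge\Pi_1$, hence $\Delta_2$, and then appeal to the Shoenfield Limit Lemma in models of $I\Sigma_1$; but this merely repackages the same use of $I\Sigma_1$, so the direct argument above seems the most economical given the machinery already set up in Section~\ref{sec:turing}.
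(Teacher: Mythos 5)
The paper gives no proof of this lemma at all: it is stated as ``well-known'' with a reference to Li~\cite{li2014} and closed immediately with \(\qed\). Your direct argument is correct and is the standard one the authors are implicitly invoking. The two-step decomposition (many-one reduce \(A\oplus B\) to \(\emptyset'\) by \(\Sigma_1\)-completeness, then reduce \(D\) to \(A\oplus B\) by a fixed functional whose negative axioms split \(X\) into \(X\cap B\) and \(X\setminus B\)) fits the paper's setwise Definition~\ref{def:turing_reducibility}, and you have put your finger on exactly the right pressure point: producing the witnessing axiom for \(X\subseteq\overline D\) requires the bounded \(\Sigma_1\) set \(X\cap B\) to be \(\mathcal{M}\)-finite, which is Friedman's lemma and hence genuine \(I\Sigma_1\); this is consistent with the paper's remark that Li constructs, in a \(B\Sigma_1\) model, a d.c.e.\ degree not below \(\bm{0}'\). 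The only cosmetic gap is the degenerate case \(X=\varnothing\) in clause~(iii) of the functional definition, which is an artifact of the paper's formalization rather than of your argument.
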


\subsection{Requirements}
Let \(D\) be a d.c.e.\ set, the \emph{Lachlan set} of \(D\) is defined by
\[
L(D)=\{s: \exists z~[ z \in D_s - D_{s-1} \text{~and~} \exists t>s (z \notin D_t) ] \}
\]
where \(D_s\) is a fixed computable approximation of \(D\). It is easy to see that \(L(D)\) is c.e.~and \(L(D)\leq_T D\). We will build a d.c.e~set \(D\) satisfying the following requirements (where \(\Phi\), \(\Psi\) denote Turing functionals and \(W\) denotes c.e.~set):
\begin{itemize}
\item [\(P_e\):] \(\Phi_e^{L(D)} \neq D\)
\item [\(N_e\):] \(\Psi_{e}^D = W_e  \rightarrow \exists \Gamma ( \Gamma^{L(D)} = W_e ) \)
\end{itemize}
Then \(D\) and \(L(D)\) form an isolated pair by the requirements.\footnote{Alternatively, one can replace the \(P_e\)-requirement with \(\Phi_e^{A} \neq D\), the \(N_e\)-requirement with \(\Psi_{e}^{A\oplus D} = W_e  \rightarrow \exists \Gamma ( \Gamma^{A} = W_e ) \) and construct the d.c.e.\ set \(D\) and c.e.~set \(A\).}

\subsection{\texorpdfstring{P-strategies}{P-strategies}} The \(P\)-strategy is the Friedberg-Muchnik strategy, it picks a witness \(z\) and waits for \(\Phi^{L(D)} (z)\downarrow = 0\), if so, then it puts \(z\) into \(D\) and sets up a restraint on \({L(D)}\) to make the diagonalization \(\Phi^{L(D)}(z)\downarrow\neq D(z)\) succeeds.

\subsection{Disagreement argument}
The basic idea for the \(N\)-strategy is called the disagreement argument (e.g.~such argument can be found in~\cite{iw2002}). For an \(N_e\)-requirement, define the length of agreement function
\[
l(e, s) = \max\{y: \forall x < y (\Psi_e^D (x)[s]\downarrow = W_{e,s}(x))\}
\]
say \(s\) is an \(e\)-expansionary stage if \(l(e, s)  > l(e, t)\) for all \(t<s\). An \(N\)-strategy would like to define and maintain \(\Gamma^{L(D)}=W\) at its expansionary stages (we omit the subscript \(e\) for brevity), it starts with waiting for an expansionary stage, if it finds an expansionary stage \(s^*\), then it defines \(\Gamma^{L(D)}(x)[s^*]=W_{s^*}(x)\) for those undefined \(x\leq s^*\). Assume the \(N\)-strategy does not set up a \(D\)-restraint, then a \(P\)-strategy may put some element \(z\) into \(D\) to injure the computation \(\Psi^D (x)[s^*]\), this provides chances for \(x\) to enter \(W\) after stage \(s^*\) (under the assumption of \(e\)-expansionary stage), which makes \(\Gamma^{L(D)} (x)\) incorrect. The strategy is to recover the old \(\Psi^D(x)\)-computation by extract \(z\) out of \(D\), and such actions would make a disagreement \(\Psi^D(x)\neq W(x)\) (we call it receives disagreement). Since \(W\) is c.e.~and lower priority requirements can not injure this recovered computation \(\Psi^D (x)\) by initialization, the inequality is satisfied forever.
Moreover, let \(s_z\) denote the stage when \(z\) enters \(D\), then whenever the \(N_e\)-strategy extract \(z\) out of \(D\), it will put \(s_z\) into \(L(D)\) (by the definition of Lachlan set). This ensures that the \(N_e\)-strategy will not injure higher priority \(N\)-strategies: say \(N_{i}\) is higher than \(N_e\), suppose that \(N_e\) extract some element \(z\) out of \(D\), and this would affect some \(N_{i}\)-computation \(\Psi_{i}^D (n)\) and make \(\Gamma_{i}^{L(D)} (n)\) incorrect (due to \(n\) entners \(W\)). But now the enumeration of \(s_z\) into \(L(D)\) allows \(N_{i}\) to redefine \(\Gamma_{i}^{L(D)} (n)\), \(N_e\)'s action does not injure \(N_{i}\).

In the usual isolated pair construction, the strategy is organized in a way that the \(N\)-node has different outcomes to distinguish whether or not there are infinitely many expansionary stages, so we need \(I\Sigma_2\) to perform such a construction. In spite of that difficulty we can adopt the following strategy.

\subsection{\texorpdfstring{N-strategies}{N-strategies}}
Our strategy is a modification of the classical \(N\)-strategy by carefully analyzing the conflicts of requirements. For convenience, we define the number of \(D(z)\)-changes at stage \(s\) as
\[
D^{\#}_s (z)=|\{t\leq s: D_t(z)\neq D_{t-1}(z)\}|
\]
then \(D^{\#}_s (z)\leq 2\) if \(D\) is d.c.e. We first give some notations: At stage \(s\), we say \(D\) \emph{is restorable to} \(\sigma\in 2^{<\omega}\) if \(\forall z (D_s(z)\neq \sigma(z)\rightarrow D^{\#}_s (z)=1)\), that is, the changes between \(D_s\) and \(\sigma\) only due to enumeration but not extraction of some elements; and say \emph{an old \(\Psi_e^D(n)\)-computation is restorable} if \(D\) is restorable to some  \(D_{s_0}\upharpoonright\psi_e(n)[s_0]\) such that \(s_0 \leq s\) and \(\Psi_e^{D}(n)[s_0]\downarrow=0\), that is, we can restore an old \(\Psi_e^D(n)\)-computation with value \(0\) by extracting some elements out of \(D\). Note that if an old \(\Psi_e^D(n)\)-computation is restorable, then there could be more than one old computations one can restore.

An \(N_e\)-strategy acts at stage \(s\) as follows:
First check if \(N_e\) has received disagreement (it will be defined later) or \(s\) is an \(e\)-expansionary stage. If \(N_e\) has received disagreement or \(s\) is not an \(e\)-expansionary stage, then \(N_e\) succeeds. Otherwise, let \(n\) be the least such that \(\Gamma_e^{L(D)}(n)\) is undefined or incorrect:

(a) \(\Gamma_e^{L(D)}(n)\) is undefined. There are two subcases: 

(a1) If an old \(\Psi_e^D(n)\)-computation is restorable, then we define \(\Gamma_e^{L(D)}(n)=W(n)\) with the same use.

(a2) Otherwise, we define \(\Gamma_e^{L(D)}(n)=W(n)\) with a fresh use.

(b) \(\Gamma_e^{L(D)}(n)\) is incorrect. So \(n\) enters \(W\) at stage \(s\). In this case, we may verify that an old \(\Psi_e^D(n)\)-computation must be restorable, we restore \(D\) to the oldest one. Moreover, if we extract any \(z\) out of \(D\), we put \(s_z\) into \(L(D)\) (recall that \(s_z\) is the stage when \(z\) enters \(D\)). We say \(N_e\) receives disagreement and initialize the lower priority strategies. 

We remark that in case (a1), we have \(\gamma_e(n)[s]=\gamma_e(n)[s-1]\) (it is possible that \(\gamma_e(n)[s]<\psi_e(n)[s]\)), and we will see that if \(\Psi_e^D(n)\) converges and correct, then \(\Gamma_e^{L(D)}(n)\) is defined and correct (Lemma~\ref{lem:Nsatisfied}). In case (a2), we have \(\gamma_e(n)[s]\geq\psi_e(n)[s]\), and in case (b), if there are more than one old computations one can restore, then we have to restore it to oldest one (the formal meaning will be described in the Construction part), thus in both case, we have that if the \(\Psi_e^D(n)\)-computation is injured later and can not be restored (since we extract some \(z\) from \(D\)), then we can redefine the corresponding \(\Gamma_e^{L(D)}(n)\) (by putting \(s_z\) into \(L(D)\)). It follows that the \(N_e\)-requirement is satisfied. Moreover, the \(N_e\)-requirement in isolation only injure other requirements once (when it receives disagreement).

\subsection{Construction}
We build a d.c.e.\ set \(D\) meeting the requirements:

\(P_e: \Phi_e^{L(D)} \neq D\) and \(N_{\langle e_0, e_1\rangle}: \Psi_{e_0}^D = W_{e_1}  \rightarrow \exists \Gamma ( \Gamma^{L(D)} = W_{e_1})\).

Here \(L(D)\) is the Lachlan set of \(D\), so whenever we extract any \(z\) out of \(D\), we will put \(s_z\) into \(L(D)\). Let \(e=\langle e_0, e_1\rangle\), the priority order is \(P_0>N_0>P_1>N_1>P_2>N_2>\dots\). The construction is done in stages \(s\geq 0\) and each stage \(s\) is divided into substages \(t<s\). In stage \(s\) and at substage \(t<s\), we consider \(P_e\) if \(t=2e\) and \(N_e\) if \(t=2e+1\) as follows, then we go to the next substage, when we reach substage \(s\), we go to next stage \(s+1\).

\textit{Case 1}. \(t=2e\) for the \(P_e\)-requirement.
 
(1) If it receives disagreement, then do nothing.

(2) If the witness \(x_{P_e}\) is undefined, then pick \(x_{P_e}\) as fresh. 

(3) Otherwise, we check if \(\Phi_e^{L(D)} (x_{P_e}) [s] \downarrow = 0\). If yes, then we put \(x_{P_e}\) into \(D\), say \(P_e\) receives disagreement and initialize the lower priority strategies. If no, then do nothing.  

\textit{Case 2}. \(t=2e+1\) for the \(N_e\)-requirement.

We will have for each \(n\), if \(\Gamma_e^{L(D)}(n)[s]\) is defined, then there is a so called \emph{disagreement pair} \((n;\sigma,\tau)\) defined during the construction.

(1) If it receives disagreement, then do nothing.

(2) If  \(s\) is not an \(e\)-expansionary stage, then do nothing.

(3) Otherwise, let \(n\) be the least such that \(\Gamma_e^{L(D)}(n)[s]\) is undefined or incorrect.

(a) \(\Gamma_e^{L(D)}(n)[s]\) is undefined.
\begin{itemize}
    \item[(a1)] if there is an \((n;\sigma,\tau)\) for some \(\sigma\) and \(\tau\) has been defined such that \(D_s\) is restorable to \(\sigma\), then we define \(\Gamma_e^{L(D)}(n)[s]=W_{e_1,s}(n)\) with the same use as \(k=|\tau|\) and define \((n;\sigma,{L(D)}_s\upharpoonright k)\) as the disagreement pair (the old one is canceled).
    \item[(a2)] otherwise, we define \(\Gamma_e^{L(D)}(n)[s]=W_{e_1,s}(n)\) with a fresh use \(k\) and define \((n;D_s\upharpoonright \psi_{e_0}(n)[s],{L(D)}_s\upharpoonright k)\) as the disagreement pair.
\end{itemize}

(b) \(\Gamma_e^{L(D)}(n)[s]\) is incorrect. Let \((n;\sigma,\tau)\) be the disagreement pair such that \(\tau\subseteq {L(D)}_s\). We restore \(D_s\) to \(\sigma\), say \(N_e\) receives disagreement and initialize the lower priority strategies.

\subsection{Verification}
For the \(N_e\)-strategy, we have the following lemmas.
\begin{lemma}
In the case (b) (incorrect) of the Construction, \(D_s\) is restorable to \(\sigma\).
\end{lemma}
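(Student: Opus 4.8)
The plan is to trace through exactly how the disagreement pair $(n;\sigma,\tau)$ attached to $\Gamma_e^{L(D)}(n)$ is created, and to argue that when we reach case (b) at stage $s$, the set $D_s$ differs from $\sigma$ only by enumerations, never by extractions — i.e.\ $\forall z\,(D_s(z)\neq\sigma(z)\rightarrow D^{\#}_s(z)=1)$, which is the definition of ``$D_s$ is restorable to $\sigma$''. First I would fix the $N_e$-strategy under consideration and let $s_0<s$ be the last stage at which the currently-active disagreement pair $(n;\sigma,\tau)$ for $\Gamma_e^{L(D)}(n)$ was defined (via case (a1) or (a2) at substage $2e+1$). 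In case (a2), $\sigma=D_{s_0}\upharpoonright\psi_{e_0}(n)[s_0]$ and $\Psi_{e_0}^D(n)[s_0]\downarrow=0$ (this uses that $s_0$ is $e$-expansionary and $\Gamma_e^{L(D)}(n)$ was incorrect or undefined, so $W_{e_1,s_0}(n)=0$ and the length-of-agreement forces the computation to converge to $0$); in case (a1), $\sigma$ is inherited unchanged from an even earlier definition, and $D_{s_0}$ was restorable to $\sigma$ at that stage.

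The key step is a monotonicity claim: between $s_0$ and $s$, no $z<\psi_{e_0}(n)[s_0]$ (equivalently, no $z$ on which $\sigma$ could disagree with $D$) is ever extracted from $D$. I would prove this by analyzing who extracts elements. Elements leave $D$ only when a strategy ``restores $D$'', which happens in case (b) of a $P$-strategy (which never extracts — $P$-strategies only enumerate their witness) — so really only in case (b) of an $N$-strategy. A higher-priority $N$-strategy $N_i$ ($i<e$, or rather $2i+1<2e+1$) receiving disagreement would initialize $N_e$, cancelling its disagreement pair and making $\Gamma_e^{L(D)}(n)$ undefined, contradicting that $(n;\sigma,\tau)$ is still active at stage $s$; a lower-priority $N_j$ has been initialized by $N_e$ at stage $s_0$ (when the pair was set) so it cannot extract an element below $N_e$'s current parameters until it picks everything fresh again, and its fresh uses exceed $\psi_{e_0}(n)[s_0]$; and $N_e$ itself: the only time $N_e$ extracts between $s_0$ and $s$ is precisely the case (b) action at stage $s$ we are analyzing, or an intervening case (b) for some $n'$ — but case (b) for $n'<n$ would again have cancelled / re-defined $\Gamma_e^{L(D)}(n)$, and case (b) for $n'>n$ restores $D$ to $\sigma'$ which (by the same argument applied to the smaller index structure) only affects $z\geq\psi_{e_0}(n)[s_0]$, since each disagreement pair is defined with a fresh use larger than all earlier active uses. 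So the only $D$-changes below $\psi_{e_0}(n)[s_0]$ during $(s_0,s]$ are enumerations.

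From the monotonicity claim the conclusion is immediate: if $D_s(z)\neq\sigma(z)$ then either $z\geq\psi_{e_0}(n)[s_0]$ (but then $\sigma(z)$ is undefined/irrelevant — we only need restorability on $\dom(\sigma)$; here one should be slightly careful about the convention that $\sigma$ is a finite string and restorability is read off $\abs{\sigma}$), or $z<\psi_{e_0}(n)[s_0]$ and then $D_{s_0}(z)=\sigma(z)$, and since only enumerations have touched $z$ since $s_0$ we get $D^{\#}_s(z)=1$ (it changed from $\sigma(z)=0$ to $D_s(z)=1$, once). In the inherited case (a1), one chains this with the fact that $D_{s_0}$ was already restorable to $\sigma$ together with: the elements enumerated since the original definition of $\sigma$ that had already changed $D$ away from $\sigma$ were exactly the ones giving $D^{\#}=1$, and no extraction of them occurred (else the Lachlan marker $s_z$ would have gone into $L(D)$, making $\tau\not\subseteq L(D)_{s}$ fail or re-defining the pair).

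The main obstacle I anticipate is the bookkeeping in case (a1): there the pair's first coordinate $\sigma$ is \emph{copied} from an arbitrarily old pair while the second coordinate $\tau$ is refreshed to $L(D)_s\upharpoonright k$, so one must carry an invariant across all the intervening (a1)-copies — something like ``at every stage at which $(n;\sigma,\cdot)$ is the active pair, $D$ is restorable to $\sigma$'' — and verify it is preserved when $\Gamma_e^{L(D)}(n)$ becomes undefined (because some $z$ with $s_z$ entering $L(D)$ injured it) and is then re-defined via (a1). The point that makes this go through is that $\Gamma_e^{L(D)}(n)$ becomes undefined only when some $s_z\leq\gamma_e(n)$ enters $L(D)$, which by construction happens only when that same $z$ is extracted from $D$; so at the moment of re-definition via (a1) the hypothesis ``$D_s$ is restorable to $\sigma$'' is explicitly checked in the construction, and we are simply invoking that check. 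I would therefore state and prove the invariant as a short sub-claim by induction on stages, and then case (b) of the lemma is the instance of the invariant at stage $s$.
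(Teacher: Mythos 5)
There is a genuine gap: your central ``monotonicity claim'' --- that between \(s_0\) and \(s\) no \(z<\psi_{e_0}(n)[s_0]\) is ever extracted from \(D\) --- is not true in general, and the priority analysis you give for it does not work in this construction. First, the construction does \emph{not} initialize lower-priority strategies when \(N_e\) merely defines \(\Gamma_e^{L(D)}(n)\) in case (a); initialization happens only when a strategy receives disagreement (or a \(P\)-strategy enumerates its witness). Second, when a lower-priority \(N_j\) does receive disagreement, it extracts elements not by touching its own ``fresh parameters'' but by restoring \(D\) to an old string \(\sigma'\) recorded possibly long before \(s_0\); the extracted elements are whatever was enumerated into \(D\) since \(\sigma'\) was recorded, and these can perfectly well include numbers below \(\psi_{e_0}(n)[s_0]\) that lie in \(\sigma\). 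So small extractions between \(s_0\) and \(s\) can occur, and the lemma cannot be proved by ruling them out.

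What saves the lemma --- and what the paper's two-line proof uses --- is the Lachlan-set mechanism that you only gesture at in your final paragraph. If \(D_s(z)\neq\sigma(z)\) and \(D^{\#}_s(z)=2\), then \(\sigma(z)=1\), so \(s_z\) is at most the stage at which \((n;\sigma,\tau)\) was last (re)defined, hence \(s_z<|\tau|\) because the use \(k=|\tau|\) was chosen fresh in case (a2) (and is preserved in case (a1)); moreover the extraction of \(z\) occurs after \(\tau\) was recorded and puts \(s_z\) into \(L(D)\), so \(\tau\nsubseteq L(D)_s\). But case (b) is only entered with a pair satisfying \(\tau\subseteq L(D)_s\) --- otherwise \(\Gamma_e^{L(D)}(n)\) is undefined and we are in case (a). So every \(z\) with \(D_s(z)\neq\sigma(z)\) has \(D^{\#}_s(z)=1\), i.e.\ \(D_s\) is restorable to \(\sigma\). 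In short: the right argument does not forbid extractions of small elements; it shows that any such extraction would have kicked us out of case (b) before stage \(s\). Your write-up has the correct ingredient available but builds the proof on a false intermediate claim, so as it stands the argument does not go through.
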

\begin{proof}
Let \(z\) be any number such that \(D_s(z)\ne \sigma(z)\). If \(D^\#_s(z)=2\), then \({L(D)}^\#_s(s_z)=1\) and \(s_z\leq |\tau|\) by the case (a). Then \(\tau\nsubseteq {L(D)}_s\), we shall have case (a) of undefined. So \(D^\#_s(z)=1\), \(D_s\) is restorable to \(\sigma\).
\end{proof}

Although we do not distinguish whether or not there are infinitely many expansionary stages as in the traditional way, we have the following property.

\begin{lemma}\label{lem:Nsatisfied}
If \(\Psi_{e_0}^D= W_{e_1}\) holds, then \(\Gamma_e^{L(D)}\) is total and correct.
\end{lemma}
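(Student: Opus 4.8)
The plan is to work under the hypothesis \(\Psi := \Psi_{e_0}^D = W := W_{e_1}\) and to show that, once the finitely many injuries to \(N_e\) and to the strategies above it have ceased, \(N_e\) settles into a clean cycle that extends \(\Gamma := \Gamma_e\) at its \(e\)-expansionary stages. I would begin with the finite-injury bookkeeping for this construction, which is available in \(I\Sigma_1\): along the priority list \(P_0 > N_0 > P_1 > \cdots\), each strategy is initialized only finitely often and receives disagreement only finitely often, since \(P_e\) and \(N_e\) each receive disagreement at most once between consecutive initializations, and the number of initializations of a strategy is controlled by the strategies of higher priority. Fix a stage \(s_0\) past which no strategy of priority \(\ge N_e\) is initialized and no strategy of priority \(> N_e\) receives disagreement; then no strategy of higher priority than \(N_e\) changes \(D\) after \(s_0\).

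The next and main step would be to rule out \(N_e\) ever executing case (b) after \(s_0\). Suppose it does, at stage \(s_1 > s_0\), acting on the least incorrect argument \(n\), with current disagreement pair \((n;\sigma,\tau)\). Then \(\Gamma(n) = 0\) (its value is \(W_{e_1,s'}(n)\) for the case-(a2) stage \(s'\) at which the pair was created, and \(W\) is monotone with \(n\) now in \(W\)), and \(\sigma = D_{s'}\upharpoonright\psi_{e_0}(n)[s']\) with \(\Psi(n)[s']\downarrow = 0\) (at an \(e\)-expansionary stage the least \(\Gamma\)-undefined argument lies below \(l(e,\cdot)\), so the computation converges). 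By the preceding lemma \(D_{s_1}\) is restorable to \(\sigma\), so after the restoration \(\Psi(n)[s_1]\downarrow = 0\). Now \(N_e\) is idle (it received disagreement), the higher-priority strategies do not move \(D\), and the lower-priority strategies---initialized at \(s_1\)---subsequently act only with fresh, large numbers and so never touch \(D\upharpoonright\psi_{e_0}(n)[s_1]\); here one uses the d.c.e.\ bound \(D^{\#}_s(z)\le 2\) to see that a lower-priority restoration cannot reach below that use. Hence \(\Psi(n) = 0\) permanently, contradicting \(\Psi = W\). In particular \(N_e\) is never left in the ``received disagreement'' state after \(s_0\), so from \(s_0\) on every action of \(N_e\) at an \(e\)-expansionary stage falls under case (a).

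To finish, I would read off totality and correctness. Since \(\Psi = W\), the length of agreement \(l(e,s)\) is unbounded, so there are infinitely many \(e\)-expansionary stages, and at each one after \(s_0\) the strategy \(N_e\) extends \(\dom\Gamma\) by one (at the least undefined argument); hence \(\dom\Gamma\) grows beyond every bound. Fix \(n\). Since \(D\) is d.c.e., \(D\upharpoonright k\) is eventually fixed for every \(k\) by Lemma~\ref{lem:Friedman}, so \(\Psi(n)\) reaches a final value \(W(n)\) with a fixed use \(p_n\) and \(D\upharpoonright p_n\) frozen from some stage \(s_2 \ge s_0\). After \(s_2\), any invocation of case (a2) for \(n\) records \(\sigma = D\upharpoonright p_n\), to which \(D\) stays restorable forever, so case (a2) occurs for \(n\) only finitely often; thus \(\gamma_e(n)\) stabilizes to some \(k\), after which case (a1) governs \(n\), and then \(L(D)\upharpoonright k\)---a bounded subset of the c.e.\ set \(L(D)\)---changes only finitely often, so \(\Gamma_e^{L(D)}(n)\) is eventually permanently defined. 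Its final value is \(W(n)\): were it \(0\) with \(n\in W\), then once \(0,\dots,n-1\) have permanently stabilized correctly \(n\) would become the least incorrect argument and \(N_e\) would execute case (b), contradicting the previous step. Therefore \(\Gamma_e^{L(D)} = W_{e_1}\) is total and correct.

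I expect the middle step---no case (b) after \(s_0\), together with the use-stabilization claim---to be the real obstacle, since both rest on the delicate conflict analysis: that case (b) initializes the lower-priority strategies so that their later markers are fresh, that the ``restore to the oldest computation'' discipline keeps \(N_e\) in case (a1) rather than spending ever-larger fresh uses, and that extracting \(z\) from \(D\) always puts \(s_z\) into \(L(D)\) so that a \(\Gamma_e^{L(D)}(n)\)-computation which is injured beyond repair can still be redefined. The hypothesis \(\Psi = W\) is used essentially only to bound the final use \(p_n\), which is what pins down \(\gamma_e(n)\).
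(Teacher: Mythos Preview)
Your proposal is correct and rests on the same two pillars as the paper's proof: under the hypothesis \(\Psi_{e_0}^D=W_{e_1}\), (i) \(N_e\) can never execute case~(b) (since the restored computation would witness a permanent disagreement \(\Psi^D(n)=0\neq 1=W(n)\)), and (ii) once \(D\) stabilizes below the \(\Psi\)-use at \(n\), the value \(\Gamma_e^{L(D)}(n)\) becomes permanently defined and correct. The organization differs, though. The paper runs a short least-\(n\) contradiction: it fixes the first failing \(n\), takes \(s_0\) with \(D\upharpoonright\psi_{e_0}(n)\) frozen, and observes directly that any later extraction of some \(z\) must have \(s_z>\gamma_e(n)[s_0]\), so \(L(D)\) never disturbs the \(\Gamma\)-computation at \(n\); then the absence of disagreement forces correctness. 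You instead do the global finite-injury bookkeeping first, rule out case~(b) once and for all, and then for each \(n\) stabilize \(\gamma_e(n)\) via the ``restorable to \(\sigma\)'' discipline before appealing to the eventual stability of the bounded c.e.\ set \(L(D)\upharpoonright k\). Your route makes the role of the disagreement pair and the d.c.e.\ bound \(D^{\#}_s(z)\le 2\) more explicit and is easier to audit in \(I\Sigma_1\); the paper's is terser and leans harder on the reader to unpack why \(s_z>\gamma_e(n)[s_0]\). One small inaccuracy: when you say the incorrect \(\Gamma(n)\) has value \(W_{e_1,s'}(n)\) for the case-(a2) stage \(s'\), the value is actually set at the \emph{last} case-(a) stage, which may be a case-(a1) stage; your conclusion \(\Psi^\sigma(n)=0\) still follows, since monotonicity of \(W\) forces \(W_{s'}(n)=0\) regardless.
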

\begin{proof}
If not, let \(n\) be the least such that \(\Gamma_e^{L(D)}(n)\uparrow\) or \(\Gamma_e^{L(D)}(n)\downarrow\ne W_{e_1}(n)\). Since \(\Psi_{e_0}^D(n)\) converges, there exists stage \(s_0\) such that \(D_{s_0}\upharpoonright\psi_{e_0}(n)=D\upharpoonright\psi_{e_0}(n)\). For any \(z>\psi_{e_0}(n)\), if \(D^\#_s(z)=2\) for some \(s>s_0\), then we have \(s_z>\gamma_e(n)[s_0]\) by the case (a), so putting numbers into \(L(D)\) will not injure \(\Gamma_e^{L(D)}(n)[s_0]\), \(\Gamma_e^{L(D)}(n)\) is defined. Since \(\Psi_{e_0}^D= W_{e_1}\), \(N_e\) will not receive disagreement, \(\Gamma_e^{L(D)}(n)\) is correct by the case (b). This contradicts to the hypothesis.
\end{proof}

In the construction, the only action of a \(P_e\)- or \(N_e\)-strategy is when they receive disagreement. Note that by initialization, if some strategy receive disagreement, then the numbers picked by lower priority strategies are too large to injure this strategy.

\begin{theorem}
Each requirement is satisfied.
\end{theorem}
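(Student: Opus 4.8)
The plan is to carry out a \emph{bounded} finite-injury analysis in which the only non-elementary ingredient is a single $\Pi_1$-induction, which is available because $I\Sigma_1 \Leftrightarrow I\Pi_1$ over $P^-+I\Sigma_0$ (Kirby and Paris). The naive bookkeeping---``for each priority rank $k$ there is a stage after which no strategy of rank $\le k$ acts''---is $\Sigma_2$ and hence out of reach in $I\Sigma_1$; instead I would bound the \emph{cumulative number} of disagreements received by strategies of rank $<k$ by the explicit value $2^{k}-1$, which makes the inductive statement $\Pi_1$. Here I use that by Lemma~\ref{lem:Friedman} the state of the construction through any stage $s$ is $\mathcal{M}$-finite, so such a count is a genuine element of $\mathcal{M}$ and the predicates involved are $\Delta_1$.

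Concretely, list the strategies $X_{0}=P_{0},\ X_{1}=N_{0},\ X_{2}=P_{1},\ X_{3}=N_{1},\dots$ in priority order and record two features built into the construction: (i) once $X_{k}$ receives disagreement it does nothing until it is reinitialized (clause~(1) in both Case~1 and Case~2), so $X_{k}$ receives at most one disagreement between consecutive reinitializations; and (ii) $X_{k}$ is reinitialized only when some $X_{j}$ with $j<k$ receives disagreement. I would then prove by $\Pi_1$-induction on $k$ the statement $\phi(k)$: for every stage $s$, the number of disagreements received by strategies of rank $<k$ at stages $\le s$ is at most $2^{k}-1$. The base case $\phi(0)$ is vacuous. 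For the inductive step, $\phi(k)$ says that up to stage $s$ the strategy $X_{k}$ is reinitialized at most $2^{k}-1$ times, hence passes through at most $2^{k}$ ``epochs'' and, by (i), receives at most $2^{k}$ disagreements up to stage $s$; adding this to $\phi(k)$ yields the bound $(2^{k}-1)+2^{k}=2^{k+1}-1$ demanded by $\phi(k+1)$. From $\forall k\,\phi(k)$ it follows that for each $k$ the set $E_{k}$ of stages at which some strategy of rank $\le k$ receives disagreement is $\Sigma_1$ (indeed $\Delta_1$) and has cardinality at most $2^{k+1}-1$, hence is $\mathcal{M}$-finite and in particular bounded; fix a bound $s^{*}_{k}$. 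Past $s^{*}_{k}$ no strategy of rank $\le k$ receives disagreement, so no strategy of rank $>k$ is reinitialized past $s^{*}_{k}$.

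Granting the above, the verification of the requirements follows the strategy descriptions. For $P_{e}$ (rank $2e$): past $s^{*}_{2e-1}$ (set $s^{*}_{-1}=0$) the strategy $P_{e}$ is never reinitialized, so it has a permanent witness $x$, which no other strategy ever touches. If $x$ is never enumerated into $D$, then $D(x)=0$ while $\Phi_{e}^{L(D)}(x)$ does not converge to $0$ (otherwise $P_{e}$ would eventually act), so $\Phi_{e}^{L(D)}\ne D$. If $x$ enters $D$, say at a stage $s_{1}\ge s^{*}_{2e-1}$ with $\Phi_{e}^{L(D)}(x)[s_{1}]\downarrow=0$, then past $s_{1}$ the strategies of rank $<2e$ are dormant and those of rank $>2e$ have been reinitialized, so they use markers chosen fresh after $s_{1}$ (which dominates the use $\varphi_{e}(x)[s_{1}]$); hence they put no number below $\varphi_{e}(x)[s_{1}]$ into $L(D)$, and they do not extract $x$ (any $\sigma$ they later restore $D$ to was recorded after $s_{1}$, when $x\in D$, so $\sigma(x)=1$ whenever $\sigma$ mentions $x$). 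Thus $\Phi_{e}^{L(D)}(x)=0\ne 1=D(x)$ permanently. For $N_{e}$ (rank $2e+1$): past $s^{*}_{2e}$ it is never reinitialized, so Lemma~\ref{lem:Nsatisfied} applies---if $\Psi_{e_{0}}^{D}=W_{e_{1}}$ then $\Gamma_{e}^{L(D)}$ is total and correct, which witnesses $\exists\Gamma\,(\Gamma^{L(D)}=W_{e_{1}})$---and if $\Psi_{e_{0}}^{D}\ne W_{e_{1}}$ the requirement holds vacuously. (That $D$ is d.c.e., i.e.\ $D^{\#}_{s}(z)\le 2$ throughout, is guaranteed by the disagreement-pair bookkeeping together with the restorability lemma established above.)

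I expect the crux to be the second step: pinning down the right $\Pi_1$ invariant---the exponential bound $2^{k}-1$ on cumulative disagreements---and verifying that it propagates through the priority order exactly via (i) and (ii). This is precisely where $I\Sigma_1$, rather than merely $B\Sigma_1$, is needed, and the epoch-counting bookkeeping there is more delicate than the later requirement-by-requirement checking. A secondary point is the freshness/reinitialization argument guaranteeing that a computation $\Phi_{e}^{L(D)}(x)$ established past $s^{*}_{2e-1}$ is permanently protected, both in its $L(D)$-use and in the value $D(x)$.
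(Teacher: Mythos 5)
Your proposal is correct and follows essentially the same route as the paper: a $\Pi_1$-induction establishing the explicit bound $2^{k}-1$ on injuries to the $k$-th strategy, followed by an application of $I\Sigma_1$ (via a bounded-domain partial $\Sigma_1$ function, as in Lemma~\ref{lem:Friedman}) to extract a stage $s_0$ after which the strategy is never initialized, and then the standard Friedberg--Muchnik argument for $P_e$ together with Lemma~\ref{lem:Nsatisfied} for $N_e$. Your write-up merely makes explicit the freshness and restorability details that the paper leaves as ``straightforward.''
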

\begin{proof}
Since each strategy only initializes the lower priority strategies at most once, for every strategy \(R\) there is some stage \(s_0\) such that it is not be initialized after stage \(s_0\). 

(i) If \(R\) is the \(P_e\)-strategy, it is straightforward to show that it wins from stage \(s_0\).

(ii) If \(R\) is the \(N_e\)-strategy, the above lemma show that it is not affected by the actions of lower strategies and it wins from stage \(s_0\).
\end{proof}

Thus the construction uses a bounded type finite injury priority method.

\subsection{\texorpdfstring{\(\Sigma_1\)-Induction}{Sigma1-Induction}}\label{subsec:sigma1}
Finally, we moved on to the strength of induction and give the proof of Theorem~\ref{theorem:isolated}.

\newtheorem*{theorem2}{Theorem 1.3}
\begin{theorem2}
\(P^{-} + I\Sigma_1 \vdash\) There is an isolated proper d.c.e.\ degree below \(\bm{0}'\). 
\end{theorem2}
\begin{proof}
Suppose model \(\mathcal{M}\models P^- + I\Sigma_1\) and we implement the above construction in \(\mathcal{M}\). Let \({\{R_e\}}_{e\in M}\) be all the requirements, we show that each \(R_e\) is satisfied.

By \(\Pi_1\)-induction, each \(R_e\) can be injured at most \(2^e -1\) many times.
Let \(A_e=\{\langle i, n\rangle: i<e\) \(\wedge\) \(R_i \text{ acts at least \(n\) times}\}\), then \(A_e\) is \(\Sigma_1\)-definable and bounded. Define the partial computable function \(f: A_e \rightarrow M\) by \(f(\langle i, n\rangle)=s\) where \(R_i\) acts \(n\) times by the end of stage \(s\), then \(f\) is \(\Sigma_1\)-definable. By \(I\Sigma_1\), the range of \(f\) is bounded since \(\text{dom}(f)=A_e\) is bounded. So there exists \(s_0\) such that \(\text{ran}(f)\subseteq [0, s_0)\), no \(R_i\) with \(i<e\) acts at any stage after \(s_0\), \(R_e\) is permanently satisfied at some stage after \(s_0\).
\end{proof}

\section{Proof of Theorem~\ref{theorem:upperisolated}}\label{sec:upperisolated}
The construction employs a bounded finite injury priority argument. While we omit explicit discussion of the requirements ensuring the properness of the d.c.e.\ degrees, these can be incorporated into the construction in a standard manner (observed by Kontostathis~\cite{kon93}). We focus instead on the novel aspects of the construction, particularly the conflicts between \(L\)- and \(R\)-requirements listed below. 
Following Lemma~\ref{lem:below_zeroprime}, we also omit explicit discussion of the requirements ensuring the degree is below \(\bm{0}'\).

\subsection{Requirements and other preliminaries}
Let \(M\) be a model of \(I\Sigma_1\). 
We construct a d.c.e.\ set \(D\) and a computable function \(\Delta\) satisfying, for all \(e\in M\):
\begin{itemize}
    \item[] \(L_e: D'(e)=\lim_s \Delta(e,s)\),
    \item[] \(R_e: D=\Phi_e^{W_e}\to K=\Gamma^{W_e}\).
\end{itemize}
To avoid conflicts with \(L\)\nbd{}strategy, we assume that the halting set \(K\) is a subset of the odd numbers. Note that we can carry out the classical Limit Lemma within \(I\Sigma_1\), so that the \(L_e\)-requirements imply that \(D\) is low.\footnote{See~\cite{cy2007} for some weak version of the limit lemma in the absence of \(I\Sigma_1\).}
We employ a priority tree \(\mathcal{T} = {\{0\}}^{<\omega}\). For each node \(\alpha \in \mathcal{T}\), we designate it as an \emph{\(L_e\)-node} if \(\abs{\alpha} = 2e\), and as an \emph{\(R_e\)-node} if \(\abs{\alpha} = 2e + 1\).
At stage~\(s\), we start at the root of \(\mathcal{T}\). When \(\alpha\) acts, it decides whether \(\alpha\concat 0\) acts or we stop the current stage. If \(\alpha\) acts at stage \(s\), then we say \(s\) is an \(\alpha\)-stage.

Now we describe the strategies for each requirement, assuming no initialization occurs. 

\subsection{\texorpdfstring{\(R_e\)-Strategy in isolation}{Re-strategy in isolation}}\label{sec:R-strategy-in-isolation}
Let \(\alpha\) be the \(R_e\)-node.
The length of agreement for \(\alpha\) is defined for each \(\alpha\)\nbd{}stage \(s\) by 
\[
    \ell(e, s) = \max \{ y: \forall x < y (\Phi_e^{W_e} (x)[s]\downarrow = D_s(x))\}.
\]
An \(\alpha\)\nbd{}stage \(s\) is an \(\alpha\)\nbd{}expansionary stage if \(\ell(e, s)>\ell(e, t)\) for each \(\alpha\)\nbd{}stage \(t<s\). 

The node \(\alpha\) will build a local functional \(\Gamma_\alpha\) such that \(\Gamma_\alpha^{W_e}(x)=K(x)\) for all \(x\in M\), provided that \(\Phi_e^W = D\). 
To ensure the correctness of each \(\Gamma_\alpha^{W_e}(x)=K(x)\), before defining \(\Gamma_\alpha^{W_e}(x)\), we pick a fresh number \(d_{e,x}\), called the \emph{agitator} for \(x\), and wait for an \(\alpha\)\nbd{}expansionary stage \(s\) when \(d_{e,x}<\ell(e,s)\). 
Then we define \(\Gamma^{W_e}(x)=K_s(x)\) with use \(\gamma(x)=\varphi_e^{W_e}(d_{e,x})\). 
Whenever \(0=\Gamma_\alpha^{W_e}(x)\neq K(x) =1\), we enumerate \(d_{e,x}\) into \(D\) to undefine \(\gamma_e^{W_e}(x)\).

The agitator \(d_{e,x}\) is either \emph{undefined}, \emph{defined}, \emph{active}, \emph{enumerated}, or \emph{obsolete}, as described below.

\textbf{\(R_e\)-strategy:} For all \(x\in M\), the agitator \(d_{e,x}\) is initially \emph{undefined}. At stage~\(s\),
\begin{enumerate}
    \item If \(s\) is not an \(\alpha\)-expansionary stage or \(\ell(e,s)<d_{e,x}\) for some \(d_{e,x}\) that is defined, we let \(\alpha\concat 0\) act (and skip the instructions below).
\end{enumerate}
\emph{Below we assume \(s\) is an \(\alpha\)-expansionary stage and \(\ell(e,s)\ge d_{e,x}\) for all \(d_{e,x}\) that are defined.}
\begin{enumerate}[resume]
    \item\label{it:extract} For each \(x\) with \(D_s(d_{e,x})=1\), we extract \(d_{e,x}\) and let \(d_{e,y}\) be \emph{undefined} for \(y\ge x\).
    \item\label{it:enumerate} Let \(x\) be the least (if any) such that \(\Gamma^{W_{e,s}}(x)\downarrow\neq K_s(x)\). \(d_{e,x}\) is \emph{enumerated} into \(D\) so \(D(d_{e,x})=1\), and \(d_{e,y}\) becomes \emph{undefined} for \(y>x\). We stop the current stage.
    \item Let \(y\) be the least (which always exists) such that \(\Gamma^{W_{e,s}}(y)\uparrow\). For each \(x\) with \(y\le x < \ell(e,s)\), 
    \begin{enumerate}
        \item\label{it:easy_case} If \(K_s(x)=1\), we define \(\Gamma^{W_{e,s}}(x)=1\) with use \(\gamma(x)=0\). \(d_{e,x}\) is \emph{obsolete}.
        \item\label{it:get_defined} If \(K_s(x)=0\) but \(d_{e,x}\) is undefined, we let \(d_{e,x}\) be \emph{defined} with a fresh number. 
        \item\label{it:hard_case} If \(K_s(x)=0\), \(d_{e,x}\) is defined, we define \(\Gamma^{W_{e,s}}(x)=0\) with use \(\gamma(x)=\varphi_e^W(d_{e,x})\). We say \(d_{e,x}\) is \emph{active}.
    \end{enumerate}
    Then we let \(\alpha\concat 0\) act.
\end{enumerate}

If we focus on a particular \(d_{e,x}\) with \(x\) being odd, a typical lifespan of it is as follows:
At first, it is undefined. In Item~\ref{it:get_defined}, \(d_{e,x}\) is \emph{defined} with a fresh number. At the next \(\alpha\)-expansionary stage, it is active in Item~\ref{it:hard_case}. At the next \(\alpha\)-expansionary stage, we have \(x\) enumerated into \(K\) and therefore \(d_{e,x}\) enumerated into \(D\) in Item~\ref{it:enumerate}. Then, at the next \(\alpha\)-expansionary stage, it is extracted and undefined in Item~\ref{it:extract}, and then in Item~\ref{it:easy_case}, it becomes \emph{obsolete}. \(\Gamma^W(x) = K(x)\) is then always correct.

\begin{lemma}\label{lem:R1}
    Suppose \(\Phi_e^W = D\) and \(\Gamma_\alpha^W\) is total. Then we have \(\Gamma_\alpha^W=K\).
\end{lemma}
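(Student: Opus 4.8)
The plan is to argue that, under the hypotheses $\Phi_e^{W_e}=D$ and $\Gamma_\alpha^{W_e}$ total, every computation $\Gamma_\alpha^{W_e}(x)$ that is declared during the construction eventually settles on the correct value $K(x)$, and moreover that $\Gamma_\alpha^{W_e}(x)\!\downarrow$ for every $x$. I would fix $x$ and proceed by cases on how the construction last defines $\Gamma^{W_e}(x)$. If $x$ is eventually handled in Item~\ref{it:easy_case} (the ``easy case''), then $K_s(x)=1$ at that stage, hence $x\in K$ and $\Gamma^{W_e}(x)=1=K(x)$ with use $0$, which can never be injured; so $\Gamma_\alpha^{W_e}(x)=K(x)$ permanently. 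If $x$ is instead handled in Item~\ref{it:hard_case} with $K_s(x)=0$, then $\Gamma^{W_e}(x)=0$ with use $\gamma(x)=\varphi_e^{W_e}(d_{e,x})$; there are two subcases. Either $x$ never enters $K$, in which case I must show $W_e$ does not change below $\varphi_e^{W_e}(d_{e,x})$ again so that the computation is frozen and $\Gamma_\alpha^{W_e}(x)=0=K(x)$; or $x$ later enters $K$, in which case Item~\ref{it:enumerate} fires, $d_{e,x}$ is enumerated into $D$, and — using $\Phi_e^{W_e}=D$ — the value $\Phi_e^{W_e}(d_{e,x})$ must change from $0$ to $1$, which forces a change in $W_e\!\upharpoonright\!\varphi_e^{W_e}(d_{e,x})=\gamma(x)$, undefining $\Gamma^{W_e}(x)$; then at the next $\alpha$-expansionary stage $d_{e,x}$ is extracted in Item~\ref{it:extract} and $x$ is reprocessed in Item~\ref{it:easy_case}, landing in the first case above.

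The remaining task is to verify that $x$ actually is processed in one of these items, i.e.\ that $\Gamma^{W_e}(x)$ does get (re)defined whenever it is undefined. Here is where the hypothesis that $\Gamma_\alpha^{W_e}$ is total does real work, combined with the assumption $\Phi_e^{W_e}=D$: the latter guarantees that $\ell(e,s)\to\infty$ along $\alpha$-stages, so there are infinitely many $\alpha$-expansionary stages and each threshold $d_{e,x}<\ell(e,s)$ is eventually met; thus Item~(4) is reached for $x$ cofinally often, and since $\Gamma_\alpha^{W_e}(x)$ is assumed to converge, it cannot be undefined infinitely often, so it stabilizes — and by the case analysis above it stabilizes at $K(x)$. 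I would also note that the ``agitator'' bookkeeping (the $d_{e,y}$ for $y\ge x$ being reset to undefined whenever $d_{e,x}$ changes status) only ever affects $\Gamma^{W_e}(y)$ for $y\ge x$, so the analysis for a fixed $x$ is not disrupted by activity on larger arguments once $x$ itself has settled.

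The main obstacle I anticipate is the bookkeeping around the lifespan of a single agitator $d_{e,x}$ and making the ``$W_e$ changes below $\gamma(x)$'' argument precise: one must track carefully that between the stage where $\Gamma^{W_e}(x)=0$ is defined with use $\varphi_e^{W_e}(d_{e,x})$ and the stage where $x$ enters $K$, the computation $\Phi_e^{W_e}(d_{e,x})=D(d_{e,x})=0$ is maintained (so the use is genuinely frozen), and then that enumerating $d_{e,x}$ into $D$ at an $\alpha$-expansionary stage where $\ell(e,s)>d_{e,x}$ really does force the $W_e$-change that liberates $\gamma(x)$ — this is exactly the standard ``the agitator agitates'' mechanism, but it has to be checked against the precise definitions of $\ell(e,s)$ and of expansionary stage. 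Everything else is routine induction on $x$, using only $I\Sigma_1$-level reasoning since the whole argument is about a $\Delta_2$ array of events with an effective bound on injuries (each $x$ is touched boundedly often).
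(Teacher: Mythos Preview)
Your proposal is correct and uses the same core mechanism as the paper: the agitator $d_{e,x}$ forces a $W_e$-change below $\gamma(x)$ whenever $x$ enters $K$, undefining $\Gamma^{W_e}(x)$ so it can be redefined correctly. The paper packages this as a short proof by contradiction: assume $\Gamma_\alpha^{W_e}(x)\neq K(x)$, note the last definition must come from Item~\ref{it:hard_case} with value $0$, so $K(x)=1$; then Item~\ref{it:enumerate} fires at some $s_1$, and at the next $\alpha$-expansionary stage $s_2$ one has $\Phi_e^{W_e}(d_{e,x})[s_0]=0\neq 1=\Phi_e^{W_e}(d_{e,x})[s_2]$, forcing $W_e\!\upharpoonright\gamma(x)$ to change and hence $\Gamma_\alpha^{W_e}(x)\!\uparrow$ at $s_2$, contradicting ``last time.'' Your forward case analysis is the unfolded version of exactly this; the only superfluous part is the clause ``I must show $W_e$ does not change below $\varphi_e^{W_e}(d_{e,x})$'' in the $K(x)=0$ subcase --- you do not need to show that, and you correctly recover by invoking the totality hypothesis a paragraph later.
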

\begin{proof}
    Given an \(x\), suppose, towards a contradiction, that \(\Gamma_\alpha^{W}(x)\neq K(x)\).
    In such a case, Item~\ref{it:easy_case} does not happen.

    Since \(\Gamma_\alpha^{W_e}(x)\) is eventually defined, let \(s_0\) be the stage when Item~\ref{it:hard_case} happens for the last time. Let \(s_1>s_0\) be the stage when Item~\ref{it:enumerate} happens. Let \(s_2>s_1\) be the next \(\alpha\)-expansionary stage. Then we have 
    \[
        0=D(d_{e,x})[s_0]=\Phi_e^W(d_{e,x})[s_0]\downarrow \neq \Phi_e^W(d_{e,x})[s_2]\downarrow = D(d_{e,x})[s_2]=1
    \]
    which implies \(\varphi_e^W(d_{e,x})[s_0]\neq \varphi_e^W(d_{e,x})[s_2]\). Hence, \(\Gamma_\alpha^{W}(x)\uparrow\) at stage \(s_2\), contradicting the choice of \(s_0\).
\end{proof}

\begin{lemma}\label{lem:R2}
    Suppose \(\Phi_e^W = D\), then \(\Gamma_\alpha^W\) is total.
\end{lemma}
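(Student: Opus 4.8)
The plan is to show that for each $x$, the agitator $d_{e,x}$ stabilizes into a state from which $\Gamma_\alpha^{W_e}(x)$ gets defined and stays defined, so that $\Gamma_\alpha^{W_e}$ is total. First I would argue by (meta-)induction on $x$: suppose all $d_{e,y}$ for $y < x$ have reached a final state (undefined-never-revisited, obsolete, or active with a permanent computation) after some stage $s_x$, and that $\Gamma_\alpha^{W_e}(y)\downarrow$ permanently for all $y<x$ after $s_x$. Since $\Gamma_\alpha^{W_e}$ is being built and $\Phi_e^{W_e}=D$, there are infinitely many $\alpha$-expansionary stages (this needs a brief argument: $\ell(e,s)\to\infty$ because $\Phi_e^{W_e}=D$ is a genuine total computation agreeing with $D$, and the waiting condition $\ell(e,s)\ge d_{e,x}$ for defined agitators is eventually met since each defined $d_{e,x}$ is fixed once the agitators below it settle). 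At an $\alpha$-expansionary stage past $s_x$ with $\ell(e,s)$ large, the least $y$ with $\Gamma^{W_{e,s}}(y)\uparrow$ is $x$ (or larger), so Item~\ref{it:easy_case}, \ref{it:get_defined}, or \ref{it:hard_case} applies to $x$.

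Next I would trace the finitely many transitions $d_{e,x}$ can undergo after $s_x$. If $K(x)=1$ eventually, then once $x\in K_s$ we land in Item~\ref{it:easy_case}, set $\Gamma^{W_{e,s}}(x)=1$ with use $0$, and this is permanent (use $0$ means $W_e$-changes never undefine it, and the value is correct so Item~\ref{it:enumerate} never fires on $x$). If $K(x)=0$ forever: starting undefined, $d_{e,x}$ becomes defined (Item~\ref{it:get_defined}), then at the next $\alpha$-expansionary stage active with $\Gamma^{W_{e,s}}(x)=0$ and use $\varphi_e^{W_e}(d_{e,x})$ (Item~\ref{it:hard_case}); since $K(x)=0$ the value is correct, so Item~\ref{it:enumerate} never enumerates $d_{e,x}$, and $d_{e,x}$ is never extracted in Item~\ref{it:extract} (that only triggers when $D_s(d_{e,x})=1$). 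A $W_e$-change below $\varphi_e^{W_e}(d_{e,x})$ could undefine $\Gamma^{W_{e,s}}(x)$, but since $\Phi_e^{W_e}(d_{e,x})\downarrow$ computes the fixed value $D(d_{e,x})=0$, there is a stage after which $\varphi_e^{W_e}(d_{e,x})$-use is settled; past that stage the definition in Item~\ref{it:hard_case} is permanent. The remaining case — $x\in K$ but $d_{e,x}$ already enumerated before $s_x$ — is handled by the lifespan description: after enumeration, $d_{e,x}$ is extracted and undefined (Item~\ref{it:extract}), then Item~\ref{it:easy_case} makes it obsolete with $\Gamma^{W_e}(x)=1$ permanently, as in Lemma~\ref{lem:R1}.

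The induction step is then: $\Gamma_\alpha^{W_e}(x)\downarrow$ permanently past some $s_{x+1}\ge s_x$, and $d_{e,x}$ has reached a final state, so the hypothesis propagates to $x+1$. Running this through all $x\in M$ — which is where I would invoke the ambient $I\Sigma_1$ (via Lemma~\ref{lem:Friedman}, so that the relevant approximations are $\mathcal{M}$-finite and the "stabilization stage" function is well-behaved) — gives that $\Gamma_\alpha^{W_e}(x)\downarrow$ for all $x$, i.e.\ $\Gamma_\alpha^{W_e}$ is total.

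The main obstacle I anticipate is verifying the \emph{infinitely many $\alpha$-expansionary stages} claim cleanly, since the definition of $\ell(e,s)$ and expansionary stages is relative to $\alpha$-stages, and the waiting clause in Item~(1) — "$\ell(e,s)<d_{e,x}$ for some defined $d_{e,x}$" — could in principle stall the strategy if some $d_{e,x}$ never gets a stable value. Resolving this requires the inductive control over the agitators below: once $d_{e,0},\dots,d_{e,x-1}$ are fixed, the agitator $d_{e,x}$ (when defined in Item~\ref{it:get_defined}) is a single fresh number, and since $\Phi_e^{W_e}=D$ forces $\ell(e,s)\to\infty$ along $\alpha$-stages, eventually $\ell(e,s)\ge d_{e,x}$; so the stall is temporary and expansionary stages keep coming. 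This interdependence of "agitators settle" and "expansionary stages recur" is the delicate point and must be organized as a single simultaneous induction rather than two separate arguments.
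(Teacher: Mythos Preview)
Your argument treats the \(R_e\)-strategy as if it ran in isolation, and that is exactly where it breaks. You claim that when \(K(x)=0\), ``Item~\ref{it:enumerate} never enumerates \(d_{e,x}\), and \(d_{e,x}\) is never extracted in Item~\ref{it:extract}.'' But \(d_{e,x}\) can be enumerated into \(D\) by a \emph{lower-priority} \(L_k\)-node: in Item~\ref{it:Upsilon_active} of the \(L_k\)-strategy (for \(k>e\) with \(x=2(k-e)\)), whenever \(L_k\) records a new computation \(\sigma\), it enumerates the currently active \(d_{e,2(k-e)}\) into \(D\). At the next \(\alpha\)-expansionary stage, Item~\ref{it:extract} then extracts \(d_{e,x}\) and undefines it (and every \(d_{e,y}\) with \(y\ge x\)), wiping out \(\Gamma_\alpha^{W_e}(x)\). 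Likewise, Item~\ref{it:Upsilon_defined} can undefine a merely defined \(d_{e,x}\). This cycle---defined, active, enumerated by \(L_k\), extracted, undefined---can repeat, so your ``finitely many transitions after \(s_x\)'' claim is false as stated, and the induction step for even \(x\) does not go through.

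The paper anticipates exactly this: it defers the proof to the verification section ``where we will also address the influence of the \(L_e\)-strategy,'' and its proof of Lemma~\ref{lem:R2} is not the transition-tracing you propose but an explicit \emph{bound} \(p(x)\) on the number of times \(d_{e,y}\) (for \(y\le x\)) is enumerated into \(D\). The bound uses that for odd \(y\) the enumeration happens at most once (a genuine \(K\)-change), while for even \(y=2(k-e)\) the \(L_k\)-strategy can enumerate \(d_{e,y}\) at most \(2^k\) times (via Lemma~\ref{lem:low_bounded}); summing gives an effective bound of order \(2^{e+x/2}\). Only once this bound is in hand does one know that \(\Gamma_\alpha^{W_e}(x)\) eventually stops being undefined. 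Your proposal is missing this entire layer of the argument; to repair it you would need to bound how many times each \(L_k\) can perturb \(d_{e,2(k-e)}\), which is precisely what the paper does.
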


The proof of Lemma~\ref{lem:R2} will be given in the verification section, where we will also address the influence of the \(L_e\)\nbd{}strategy.
With Lemma~\ref{lem:finite_injury}, Lemma~\ref{lem:R2}, and Lemma~\ref{lem:R1}, we have the following lemma:
\begin{lemma}\label{lem:R_satisfied}
    For each \(e\in M\), \(R_e\) is satisfied. \qed{}
\end{lemma}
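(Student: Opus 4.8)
The goal is to show that, assuming $\Phi_e^W=D$, the local functional $\Gamma_\alpha^W$ built by the $R_e$-node $\alpha$ is total, i.e.\ for every $x\in M$ the computation $\Gamma_\alpha^W(x)$ is eventually defined and not subsequently destroyed forever. The plan is to argue by induction on $x$ (formalizable as $\Sigma_1$- or $\Pi_1$-induction in $\mathcal{M}$, once we check the relevant statement is of the right complexity), fixing $x$ and a stage after which no node of priority $\le\alpha$ ever injures $\alpha$ — such a stage exists by Lemma~\ref{lem:finite_injury}, and also no $d_{e,y}$ with $y<x$ is ever again set undefined. Since $\Phi_e^W=D$ and $\alpha$ lies on the true path, there are infinitely many $\alpha$-expansionary stages, and for each defined $d_{e,x'}$ we eventually have $\ell(e,s)\ge d_{e,x'}$ permanently; so cofinitely many $\alpha$-stages pass the guard in Item~(1) and reach Items~(2)--(4). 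I would then trace the lifespan of $d_{e,x}$ exactly as in the paragraph preceding the lemma: if $K(x)=1$ then Item~\ref{it:easy_case} eventually defines $\Gamma^W(x)=1$ with use $0$, which is never injured; if $K(x)=0$ forever, then after $d_{e,x}$ gets defined in Item~\ref{it:get_defined} it is made active in Item~\ref{it:hard_case} at the next $\alpha$-expansionary stage, with use $\varphi_e^W(d_{e,x})$, and stays defined because $d_{e,x}\notin D$ and (by $\Phi_e^W=D$ and correctness of the approximation) the $W$-use stabilizes.

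The subtle case is when $x$ enters $K$ at some stage — then in Item~\ref{it:enumerate} we enumerate $d_{e,x}$ into $D$, and at the next $\alpha$-expansionary stage Item~\ref{it:extract} extracts it and sets $d_{e,x}$ undefined, and then Item~\ref{it:easy_case} redefines $\Gamma^W(x)=1$ with use $0$. The key point to nail down is that the $D$-change at $d_{e,x}$ (enumeration then extraction) cannot recur: $D$ is d.c.e., so $D^{\#}(d_{e,x})\le 2$, hence once $d_{e,x}$ has been enumerated and extracted it is permanently out of $D$ and the use-$0$ computation $\Gamma^W(x)=1=K(x)$ is permanent. I would also record that between the enumeration and the extraction, $\Gamma^W(x)$ may be undefined for those intermediate stages — this is why the argument must be "eventually defined" rather than "always defined from some point", and why totality is the right formulation. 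Finiteness of the whole process for a fixed $x$ (each of the five states is entered at most a bounded number of times) is what lets the induction go through in $I\Sigma_1$.

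I expect the main obstacle to be the interaction with the $L_e$-strategy, which the statement of Lemma~\ref{lem:R2} explicitly defers to the verification section: an $L$-node below $\alpha$ may also enumerate numbers into $D$, and one must check these numbers are chosen large (fresh relative to $\alpha$'s current agitators and uses) so that they never injure an active computation $\Gamma_\alpha^W(x)$, and that $\alpha$'s own enumerations/extractions of agitators are compatible with $L_e$'s demands on $D$ — this is precisely the conflict flagged in the introduction to Section~\ref{sec:upperisolated}. Concretely I would: (i) show by the priority/initialization bookkeeping that after $\alpha$ is last initialized, every $d_{e,x}$ is picked fresh and hence below no restraint that matters and above every number an $L$-node has so far committed to keeping out of $D$; (ii) show conversely that $L_e$-nodes pick their diagonalization witnesses fresh, so $\alpha$'s agitators (chosen at bounded stages) are respected; (iii) combine with Lemma~\ref{lem:finite_injury} to get the finite-injury bound, and then run the lifespan analysis above unobstructed. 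The counting that each $d_{e,x}$ cycles through its states only finitely often, uniformly enough to invoke $I\Sigma_1$, is the routine-but-essential step.
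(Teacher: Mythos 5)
You correctly observe that Lemma~\ref{lem:R_satisfied} is an immediate consequence of Lemmas~\ref{lem:R1}, \ref{lem:R2}, and~\ref{lem:finite_injury}, which is how the paper treats it (the lemma is stated with no separate proof). So the substance of your proposal is really an attempted proof of Lemma~\ref{lem:R2}, and there it has a genuine gap caused by a misreading of the \(L\)--\(R\) conflict.

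You model the threat to \(\Gamma_\alpha^W\) as ``an \(L\)-node below \(\alpha\) may also enumerate numbers into \(D\),'' to be discharged by showing that \(L\)-nodes pick their own witnesses fresh. But in this construction \(L\)-nodes have no witnesses of their own: in Item~\ref{it:Upsilon_active} the \(L_k\)-node enumerates the \emph{\(R_i\)-node's agitator} \(d_{i,2(k-i)}\) into \(D\). Consequently your claim that when \(K(x)=0\) forever the agitator ``stays defined because \(d_{e,x}\notin D\)'' fails for even \(x\): for \(x=2(k-e)\), the \(L_k\)-node may repeatedly enumerate successive versions of \(d_{e,x}\), and each such enumeration forces a \(W\)-change below \(\gamma(x)\) and undefines \(\Gamma_\alpha^W(x)\). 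The bound \(D^{\#}(d_{e,x})\le 2\) controls one fixed number, not the slot \(d_{e,x}\), which is reassigned a fresh value after each undefinition; so your argument does not bound the cycling. What is needed is the explicit counting the paper performs: because \(\abs{\dom\Upsilon_k}\le 2^k\) (Lemma~\ref{lem:low_bounded}), each \(L_k\)-node enumerates some version of \(d_{e,2(k-e)}\) at most \(2^k\) times, giving a recurrence \(p(x)\le p(x-1)+2^{x/2+e}\) for even \(x\) and \(p(x)\le p(x-1)+1\) for odd \(x\), with an explicit closed-form bound. That effective bound is precisely what makes the argument formalizable in \(I\Sigma_1\); your hedge ``once we check the relevant statement is of the right complexity'' is where it must be supplied, since ``\(\Gamma_\alpha^W(x)\) is eventually permanently defined'' is a priori a \(\Sigma_2\) statement.
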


\subsection{\texorpdfstring{\(\Delta\)-Strategy}{Delta-strategy}}\label{sec:Delta-strategy}
We define \(\Delta(e,s)=0\) for all \(e\ge s\). At the end of stage \(s\), we define 
\[
    \Delta(e,s)=\begin{cases}
        1, & \text{if}\; \Phi_{e,s}^{D_s}(e)\downarrow; \\
        0, & \text{if}\; \Phi_{e,s}^{D_s}(e)\uparrow.
    \end{cases}
\]
The \(L_e\)\nbd{}strategy aims to ensure that \(\lim_s \Delta(e,s)\) exists in \(M\). Note that if this limit exists, it automatically equals \(D'(e)\), as desired.

\subsection{\texorpdfstring{\(L_e\)-Strategy}{Le-strategy}}\label{sec:L-strategy}
Let \(\beta\) be the \(L_e\)-node. In this section, we assume that any \(\alpha\subsetneq \beta\) is not going to be initialized. 
To ensure \(\lim_s \Delta(e,s)\) exists in \(M\), \(\beta\) aims to preserve any computation \(\Phi_{e,s}^{D_s}(e)\) established at stage~\(s\). 
The computation may be injured whenever an \(R_i\)\nbd{}node \(\alpha\subseteq \beta\) (where \(2i=\abs{\alpha}\)) enumerates or extracts its agitator \(d_{i,x}\) (as described in Item~\ref{it:enumerate} and Item~\ref{it:extract} of the \(R\)\nbd{}strategy). 
Our goal is to carefully control the frequency of such injuries.

For an \(R_i\)\nbd{}node \(\alpha\) and an \(L_e\)\nbd{}node \(\beta\) with \(i<e\), \(\beta\) will take control of \(d_{i,2(e-i)}\) and let \(d_{i,j}\) with \(j<2(e-i)\) initialize \(L_e\) at stage \(s\) whenever \(D_s(d_{i,j})\neq D_{s-1}(d_{i,j})\).

For notational convenience, we say \(\sigma\) is a \emph{computation} (for \(L_e\) at stage~\(s\)) if \(\Phi_{e,s}^{\sigma}(e)\downarrow\) and \(\sigma\) is an initial segment of \(D_s\). 
The node \(\beta\) maintains a local partial function \(\Upsilon=\Upsilon_e\) throughout the construction. 
For a finite string \(\sigma\), \(\Upsilon(\sigma)\) is a mapping that maps a stage~\(s\) to a subset of \(\{i<e\}\). 
If \(L_e\) is initialized at stage \(s\), then \(\Upsilon\) is canceled.

\textbf{\(L_e\)\nbd{}strategy:}
At stage~\(s\) (with \(s^*<s\) the last \(\beta\)\nbd{}stage),
\begin{enumerate}
    \item\label{it:L_mark_unrestorable} For each \(\sigma\in \dom \Upsilon\) with \(s_\sigma\le s^*\) (\(s_\sigma\) is defined in Item~\ref{it:L_record} below) and \(\Upsilon(\sigma)(s^*)\neq \Upsilon(\sigma)(s)\) (\(\Upsilon\) is defined in Item~\ref{it:Upsilon_active} below), we mark \(\tau\in \dom \Upsilon\) as \emph{unrestorable} for those \(\tau\) with \(s_\tau>s_\sigma\).
    \item\label{it:L_up} If \(\Phi_{e,s}^{D_s}(e)\uparrow\), we let \(\beta\concat 0\) act.
    \item If \(\Phi_{e,s}^{D_s}(e)\downarrow\), we let \(\sigma\) be the computation.
    \begin{enumerate}
        \item\label{it:L_restored} If \(\sigma\in \dom \Upsilon\), let \(\beta\concat 0\) act.
        \item\label{it:L_record} If \(\sigma\notin \dom \Upsilon\), let \(s_\sigma=s\) and we mark \(\sigma\) as \emph{restorable}. We initialize all nodes below \(\beta\) and do the following:
        \begin{enumerate}
            \item\label{it:Upsilon_active} For each \(i<e\) with \(d_{i,2(e-i)}\) active, let \(d_{\sigma,i}^*=d_{i,2(e-i)}\), enumerate them into \(D\) and undefine \(d_{i,j}\) for \(j>2(e-i)\). Let
            \[
                \Upsilon(\sigma)(s)=\{i<e \mid D_s(d_{\sigma,i}^*)=1\}.
            \]
            (Here, \(d_{\sigma,i}^*\) records the current version of \(d_{i,2(e-i)}\) at stage~\(s_\sigma\).)
            \item\label{it:Upsilon_defined} For each \(i<e\) with \(d_{i,2(e-i)}\) defined, we let it be undefined (and also let \(d_{i,j}\) be undefined for \(j>2(e-i)\)). \emph{Note that for these \(i\), \(\Gamma_i^W(2(e-i))\uparrow\).}
            \item\label{it:Upsilon_enumerated} For each \(i<e\) with \(d_{i,2(e-i)}\) enumerated, we do nothing.
        \end{enumerate}
    \end{enumerate}
    If \(L_e\) is initialized, we also initialize all nodes below \(\beta\).
\end{enumerate}

The idea is that \(\Upsilon(\sigma)(s)\) collects those \(i\) for which \(R_i\) still pose threats to \(\sigma\) at stage~\(s\). 
When \(\Upsilon(\sigma)(s)=\varnothing\) at some stage~\(s>s_\sigma\) while still restorable, we shall have \(\sigma\) restored and Item~\ref{it:L_restored} happens.
\begin{lemma}\label{lem:L_restored}
    Let \(\beta\) be an \(L_e\)-node. If \(\Upsilon(\sigma)(s)=\varnothing\) for some \(s\) and \(\sigma\) is restorable at stage~\(s\), then \(\sigma\) is a computation for \(L_e\) at stage \(s\).
\end{lemma}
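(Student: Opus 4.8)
The plan is to check the two clauses of ``\(\sigma\) is a computation for \(L_e\) at stage \(s\)'' separately. Write \(u=\abs{\sigma}\), and let \(s_\sigma\le s\) be the stage at which \(\beta\) recorded \(\sigma\) in Item~\ref{it:L_record}; thus \(\Phi_{e,s_\sigma}^{\sigma}(e)\!\downarrow\) with use \(u\), and \(\sigma\) is the snapshot of \(D\) below \(u\) at the moment \(\beta\) identified that computation, i.e.\ before the enumerations performed in Item~\ref{it:Upsilon_active} of the same stage. The first clause, \(\Phi_{e,s}^{\sigma}(e)\!\downarrow\), is then immediate: a convergent computation persists as the approximation grows, and its use stays \(u\). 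So the whole lemma reduces to showing \(D_s\!\upharpoonright\! u=\sigma\) --- that between \(s_\sigma\) and \(s\) the set \(D\) has been restored, below \(u\), to \(\sigma\).

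Fix \(z<u\); I would establish \(D_s(z)=\sigma(z)\) by a case analysis on which agitator \(z\) is, noting that, since \(D\) is d.c.e., the only \(z<u\) that can have \(D_s(z)\neq\sigma(z)\) are those enumerated into or extracted from \(D\) after the snapshot \(\sigma\) was taken, and every such number is an agitator. \emph{Case 1:} \(z=d_{i,j}\) with \(i<e\) and \(j<2(e-i)\); by the stipulation that such a \(d_{i,j}\) initializes \(L_e\) whenever it moves, any change to \(z\) would cancel \(\Upsilon\), but \(\Upsilon(\sigma)(s)\) is assumed defined, so no change occurred and \(D_s(z)=\sigma(z)\) (this subsumes the agitators \(d_{i,2(e'-i)}\) controlled by higher \(L_{e'}\)-nodes, as \(2(e'-i)<2(e-i)\)). \emph{Case 2:} \(z\) is the version \(d^{*}_{\sigma,i}\) of \(d_{i,2(e-i)}\) current at \(s_\sigma\), for some \(i<e\) --- that is, \(z\) was active at \(s_\sigma\) and was enumerated into \(D\) by \(\beta\) in Item~\ref{it:Upsilon_active}; then \(\sigma(z)=0\), and since \(D\) is d.c.e.\ and \(z\) is already enumerated, the only remaining move at \(z\) is a single extraction by the \(R_i\)-node in Item~\ref{it:extract}, whence \(D_s(z)=0\) iff that extraction has occurred iff \(i\notin\Upsilon(\sigma)(s)\); as \(\Upsilon(\sigma)(s)=\varnothing\), every such \(z\) has \(D_s(z)=0=\sigma(z)\). \emph{Case 3:} \(z\) is a version of \(d_{i,2(e-i)}\) already \emph{enumerated} at \(s_\sigma\), i.e.\ \(z=d^{*}_{\sigma',i}\) for a computation \(\sigma'\) recorded at an earlier stage \(s_{\sigma'}<s_\sigma\) and still in \(D\) at \(s_\sigma\); then \(\sigma(z)=1\), and if \(z\) were extracted at some stage in \((s_\sigma,s]\) then \(\Upsilon(\sigma')\) would change between two consecutive \(\beta\)-stages, so Item~\ref{it:L_mark_unrestorable} would mark \(\sigma\) (which has \(s_\sigma>s_{\sigma'}\)) \emph{unrestorable}, contradicting the hypothesis; hence \(D_s(z)=1=\sigma(z)\). \emph{Case 4:} any other agitator below \(u\) --- \(d_{i,j}\) with \(i<e\) and \(j>2(e-i)\), or an agitator of a node strictly below \(\beta\) --- is set undefined, or is extracted, at \(s_\sigma\) by Items~\ref{it:Upsilon_active} and \ref{it:Upsilon_defined} and the initialization of all nodes below \(\beta\), so when next chosen it is fresh, hence exceeds \(s_\sigma\ge u\) and can no longer affect \(D\!\upharpoonright\! u\); and the only other mechanism that could move \(D\!\upharpoonright\! u\) after \(s_\sigma\) is an action of a node \(\subsetneq\beta\) (or of a properness requirement of higher priority), which initializes \(\beta\) and cancels \(\Upsilon\), again contradicting that \(\Upsilon(\sigma)(s)\) is defined. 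Combining the cases, \(D_s(z)=\sigma(z)\) for every \(z<u\), so \(\sigma\) is an initial segment of \(D_s\), and together with the first paragraph this gives the lemma.

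The step I expect to cause the most trouble is closing the case analysis --- verifying that Cases~1--4 genuinely exhaust the ways \(D\!\upharpoonright\! u\) can move, and in particular that \(\Upsilon(\sigma)(s)=\varnothing\) is \emph{sufficient}, not merely necessary, for \(D\) to have been restored below \(u\). This requires a careful audit of the five agitator states and of the ``\(\beta\) takes control of \(d_{i,2(e-i)}\)'' convention, confirming that every \(D\)-change below \(u\) is either recorded in some \(\Upsilon(\sigma')\), or trips the \(L_e\)-initialization, or is pushed above \(u\) by freshness. The most delicate point is that one physical number may serve as \(d^{*}_{\sigma',i}\) for an earlier computation \(\sigma'\) and then, after its extraction and a fresh re-pick, as \(d^{*}_{\sigma,i}\); one has to track the version current at \(s_\sigma\) exactly as the parenthetical in Item~\ref{it:Upsilon_active} prescribes, so that Cases~2 and~3 neither overlap nor leak.
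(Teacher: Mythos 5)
Your proof is correct and follows essentially the same route as the paper's: a case analysis showing that every number below \(\abs{\sigma}\) that could move after stage \(s_\sigma\) is an agitator, handled respectively by freshness, by the condition \(\Upsilon(\sigma)(s)=\varnothing\), by the unrestorability marking of Item~\ref{it:L_mark_unrestorable}, or by \(L_e\)-initialization. You merely organize the cases by the physical number \(z\) rather than by the index \(i<e\) as the paper does, and you spell out a few points the paper leaves implicit (persistence of the \(\Phi_e\)-computation, agitators \(d_{i,j}\) with \(j>2(e-i)\), and nodes below \(\beta\)).
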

\begin{proof}
    We may assume that \(\beta\) is not initialized during the definition of the current \(\Upsilon\).
    At stage \(s_\sigma\), 
    \begin{itemize}
        \item For \(i\in \Upsilon(\sigma)(s_\sigma)\), let \(s_0\) be the stage when \(d_{\sigma,i}^*\) is extracted in the \(R_i\)\nbd{}strategy, Item~\ref{it:extract}. Then \(s_\sigma<s_0\le s\). Any new version of \(d_{i,2(e-i)}\) defined at stage \(s > s_0\) will be larger than \(\abs{\sigma}\). 
        \item For \(i\notin \Upsilon(\sigma)(s_\sigma)\) and \(d_{i,2(e-i)}\) is undefined, the new version of \(d_{i,2(e-i)}\) defined at stage~\(s>s_\sigma\) will be larger than \(\abs{\sigma}\). 
        \item For \(i\notin \Upsilon(\sigma)(s_\sigma)\) and \(d_{i,2(e-i)}\) is enumerated (in this case, \(d_{i,2(e-i)}=d_{\tau,i}^*\) for some \(\tau\) and \(s_\tau<s_\sigma\)), this poses no threat to \(\sigma\). To see this, consider \(s>s_\sigma\). If \(s\) is an \(R_i\)\nbd{}expansionary stage, then \(d_{i,2(e-i)}\) is extracted and \(\sigma\) is marked as unrestorable, contradicting to the assumption; if \(s\) is not an \(R_i\)\nbd{}expansionary stage, then the \(R_i\)\nbd{}node does nothing.
    \end{itemize}
    Therefore, for \(i<e\), only \(d_{i,x}\) with \(x<2(e-i)\) can injure \(\sigma\). But when this happens, \(\beta\) is initialized and \(\Upsilon\) is canceled.
\end{proof}

\begin{lemma}\label{lem:Upsilon_disjoint}
    Suppose \(\sigma,\tau\in \dom \Upsilon\), \(\sigma\neq \tau\), \(s_\sigma<s_\tau\), and \(\sigma\) is restorable at stage~\(s_\tau\). Then we have \(\Upsilon(\sigma)(s_\tau)\cap \Upsilon(\tau)(s_\tau)=\varnothing\).
\end{lemma}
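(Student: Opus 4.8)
\textbf{Proof proposal for Lemma~\ref{lem:Upsilon_disjoint}.}

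The plan is to trace how the sets $\Upsilon(\sigma)(s_\tau)$ and $\Upsilon(\tau)(s_\tau)$ are populated and show that an index $i < e$ cannot lie in both. Fix such an $i$. By the definition of $\Upsilon$ in Item~\ref{it:Upsilon_active}, membership $i \in \Upsilon(\rho)(s)$ for $\rho \in \{\sigma,\tau\}$ requires that at stage $s_\rho$ the agitator $d_{i,2(e-i)}$ was \emph{active}, that $d^*_{\rho,i}$ was set to equal that version of $d_{i,2(e-i)}$, and that $D_s(d^*_{\rho,i}) = 1$ (i.e.\ that version has not yet been extracted by stage $s = s_\tau$). So I would start by recording the precise state requirements: $i \in \Upsilon(\sigma)(s_\tau)$ forces that the version of $d_{i,2(e-i)}$ active at stage $s_\sigma$, call it $d^*_{\sigma,i}$, is still in $D$ at stage $s_\tau$ and has not been extracted; similarly $i \in \Upsilon(\tau)(s_\tau)$ forces that the version $d^*_{\tau,i}$ active at stage $s_\tau$ is in $D$ at $s_\tau$.

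The key step is to compare these two versions. Since $s_\sigma < s_\tau$ and at stage $s_\sigma$ the agitator $d_{i,2(e-i)} = d^*_{\sigma,i}$ is enumerated into $D$ in Item~\ref{it:Upsilon_active}, by the time we reach stage $s_\tau$ and want $d_{i,2(e-i)}$ to be \emph{active} again (which is what $i \in \Upsilon(\tau)(s_\tau)$ demands), that agitator must have gone through its full lifespan reset: the enumerated version $d^*_{\sigma,i}$ must first be extracted at some $R_i$-expansionary stage (Item~\ref{it:extract}), then $d_{i,2(e-i)}$ becomes undefined, is redefined with a fresh number (Item~\ref{it:get_defined}), and only then becomes active again (Item~\ref{it:hard_case}). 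In particular, if $d^*_{\sigma,i}$ is ever extracted after $s_\sigma$, then — by exactly the argument in the first bullet of the proof of Lemma~\ref{lem:L_restored} — $\sigma$ gets marked \emph{unrestorable} at that extraction stage (Item~\ref{it:L_mark_unrestorable} applied with the threatening $\sigma$ in the role there), because that extraction changes the value $\Upsilon(\sigma)(\cdot)$ takes at $i$. But we assumed $\sigma$ is restorable at $s_\tau$, so $d^*_{\sigma,i}$ has not been extracted by $s_\tau$. Hence the version of $d_{i,2(e-i)}$ sitting around at stage $s_\tau$ is still $d^*_{\sigma,i}$, which is \emph{enumerated}, not \emph{active}; so it cannot be picked up as $d^*_{\tau,i}$ in Item~\ref{it:Upsilon_active} at stage $s_\tau$. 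Therefore $i \notin \Upsilon(\tau)(s_\tau)$, contradicting our choice of $i$.

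A subtlety I would make sure to address is the bookkeeping around which version of $d_{i,2(e-i)}$ is "current" between $s_\sigma$ and $s_\tau$ — there could in principle be several undefine/redefine cycles — but the point is robust: \emph{any} extraction of \emph{any} active version that was recorded as some $d^*_{\rho,i}$ with $s_\rho \le s^*$ triggers the unrestorability marking in Item~\ref{it:L_mark_unrestorable}, so restorability of $\sigma$ at $s_\tau$ pins down $d^*_{\sigma,i}$ as never having left $D$, which is incompatible with it being active again. I would also note the degenerate case where $i$ simply never appears in $\Upsilon(\sigma)(s_\sigma)$ at all (because $d_{i,2(e-i)}$ was merely \emph{defined} and not \emph{active} at $s_\sigma$, handled in Item~\ref{it:Upsilon_defined}): then trivially $i \notin \Upsilon(\sigma)(s_\tau)$ since $\Upsilon(\sigma)$ values only shrink, and there is nothing to prove. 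The main obstacle is purely one of carefully matching the informal "lifespan" picture of an agitator to the formal Items~\ref{it:extract}--\ref{it:hard_case} and Items~\ref{it:Upsilon_active}--\ref{it:Upsilon_enumerated}, and confirming the unrestorability trigger fires exactly when we need it; once that correspondence is nailed down the disjointness is immediate.
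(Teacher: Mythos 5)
Your core chain of reasoning is the same as the paper's: from \(i\in\Upsilon(\sigma)(s_\tau)\) you conclude that \(d^*_{\sigma,i}\) has not been extracted by stage~\(s_\tau\), hence \(d_{i,2(e-i)}\) is still that same version and still in the \emph{enumerated} state at stage~\(s_\tau\), hence at stage \(s_\tau\) the index \(i\) falls under Item~\ref{it:Upsilon_enumerated} rather than Item~\ref{it:Upsilon_active} when \(\Upsilon(\tau)\) is set up, so \(i\notin\Upsilon(\tau)(s_\tau)\). That is exactly the published proof, and it is correct.

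However, the justification you give for the non-extraction step is wrong. You claim that if \(d^*_{\sigma,i}\) were extracted after \(s_\sigma\), then \emph{\(\sigma\)} would be marked unrestorable by Item~\ref{it:L_mark_unrestorable}, and you invoke the hypothesis that \(\sigma\) is restorable to rule this out. But Item~\ref{it:L_mark_unrestorable} works in the opposite direction: when \(\Upsilon(\sigma)\) changes (i.e., some \(d^*_{\sigma,i}\) is extracted), the computations marked unrestorable are the \(\tau'\) with \(s_{\tau'}>s_\sigma\) --- the \emph{later} ones --- never \(\sigma\) itself. Indeed, extracting \(d^*_{\sigma,i}\) is precisely what \emph{restores} \(\sigma\); it is the later computations, found while \(d^*_{\sigma,i}\in D\), that are thereby destroyed. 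So restorability of \(\sigma\) does not, via that mechanism, forbid the extraction, and the same misreading reappears in your closing paragraph (``any extraction \dots\ triggers the unrestorability marking, so restorability of \(\sigma\) \dots\ pins down \(d^*_{\sigma,i}\) as never having left \(D\)''). Fortunately the detour is unnecessary: as you yourself observe in your first paragraph, \(i\in\Upsilon(\sigma)(s_\tau)\) means \(D_{s_\tau}(d^*_{\sigma,i})=1\), which already says \(d^*_{\sigma,i}\) was never extracted (and hence the version of \(d_{i,2(e-i)}\) never reset). Strike the appeal to restorability and Item~\ref{it:L_mark_unrestorable}, keep the direct reading of the definition of \(\Upsilon(\sigma)(s_\tau)\), and your argument coincides with the paper's.
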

\begin{proof}
    Let \(i\in \Upsilon(\sigma)(s_\tau)\). Then Item~\ref{it:extract} has not happened for the \(R_i\)\nbd{}strategy at stage~\(s_\tau\). Therefore, \(d_{i,2(e-i)}\) is still enumerated, and \(i\) is in Case~\ref{it:Upsilon_enumerated} for \(\Upsilon(\tau)\) at stage~\(s_\tau\).
\end{proof}
Therefore, while \(i\in \Upsilon(\sigma)(s)\), \(R_i\) poses no threat to \(\tau\).

\begin{lemma}\label{lem:low_bounded}
    Let \(\beta\) be an \(L_e\)-node. Then \(\abs{\dom\Upsilon}\le 2^e\).
\end{lemma}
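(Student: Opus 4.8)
The plan is to work within a single \emph{incarnation} of the local function $\Upsilon=\Upsilon_e$: since every initialization of $L_e$ cancels $\Upsilon$, it suffices to bound the number of strings that can enter $\dom\Upsilon$ between two consecutive initializations. List those strings as $\sigma_1,\dots,\sigma_k$ in the order they are recorded in Item~\ref{it:L_record}, so $s_{\sigma_1}<\dots<s_{\sigma_k}$, and attach to $\sigma_j$ its \emph{signature} $S_j:=\Upsilon(\sigma_j)(s_{\sigma_j})\subseteq\{0,\dots,e-1\}$, i.e.\ the set of $i<e$ for which $\beta$ enumerates the current copy of $d_{i,2(e-i)}$ on behalf of $\sigma_j$ in Item~\ref{it:Upsilon_active}. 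Since there are only $2^e$ such subsets, the whole task reduces to showing that $j\mapsto S_j$ is injective.

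First I would isolate two structural facts. \emph{(Monotonicity.)} For a fixed $\sigma\in\dom\Upsilon$ the set $\Upsilon(\sigma)(s)=\{i<e:D_s(d^*_{\sigma,i})=1\}$ is non-increasing in $s$: each $d^*_{\sigma,i}$ is enumerated once (Item~\ref{it:Upsilon_active}) and, $D$ being d.c.e., once extracted never returns to $D$ --- this is the only point where d.c.e.-ness enters. \emph{(Emptiness forces restoration.)} If $\sigma_j$ is restorable at a $\beta$-stage $s$ with $\Upsilon(\sigma_j)(s)=\varnothing$, then Lemma~\ref{lem:L_restored} gives that $\sigma_j$ is a computation for $L_e$ at stage $s$; since the use of a convergent computation never shrinks, $\sigma_j$ coincides with $D_s$ restricted to the current use, hence it is \emph{the} computation, Item~\ref{it:L_restored} fires, nothing new is recorded, and (by the standard initialization bookkeeping) nothing is recorded thereafter. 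So $S_j\neq\varnothing$ for every $j<k$; this already disposes of $e=0$.

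Next I would combine Lemma~\ref{lem:Upsilon_disjoint} with monotonicity. If $\sigma_a$ and $\sigma_b$ are both still restorable at the current stage, with $a<b$, then $\sigma_a$ is restorable at stage $s_{\sigma_b}$, so $\Upsilon(\sigma_a)(s_{\sigma_b})\cap\Upsilon(\sigma_b)(s_{\sigma_b})=\varnothing$, and shrinking both sides shows the \emph{current} values $\Upsilon(\sigma_a)(s)$ and $\Upsilon(\sigma_b)(s)$ are disjoint too. Thus the still-restorable strings carry pairwise disjoint current $\Upsilon$-values, at most one of them empty, so there are at most $e+1$ of them. For an \emph{unrestorable} $\sigma_j$ I would locate the $\beta$-stage at which Item~\ref{it:L_mark_unrestorable} marked it --- some earlier $\sigma_l$ ($l<j$) lost an $\Upsilon$-element at a $\beta$-stage past $s_{\sigma_j}$ --- and argue that $S_j$ cannot equal any $S_l$ with $l<j$: a shared index $i$ would force the old copy $d^*_{\sigma_l,i}$ of $d_{i,2(e-i)}$ to have been extracted and a fresh copy re-activated before $s_{\sigma_j}$ (so that $\beta$ could re-enumerate it for $\sigma_j$), and, tracking the order of operations inside a $\beta$-stage (Item~\ref{it:L_mark_unrestorable} runs before Item~\ref{it:L_record}) together with the marking rule, this re-activation is incompatible with $\sigma_j$ and $\sigma_l$ both persisting in $\dom\Upsilon$ with the stated restorability status. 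Putting the cases together yields injectivity of $j\mapsto S_j$ and hence $k\le 2^e$.

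I expect the last step --- ruling out signature repetition among unrestorable strings --- to be the main obstacle. The agitators $d_{i,2(e-i)}$ ($i<e$) are shared resources, and between two recordings an $R_i$-node may extract its agitator in Item~\ref{it:extract} and re-activate a fresh one, so the same index $i$ can honestly appear in several signatures; the proof must lean on the exact timing of Item~\ref{it:L_mark_unrestorable}, which marks \emph{every} later-recorded string unrestorable the moment an earlier $\Upsilon$-value changes, to show that such re-use is always already accounted for in the marking structure. That bookkeeping is the delicate part.
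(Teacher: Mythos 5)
Your reduction to a single incarnation of \(\Upsilon\) and your two preliminary observations (monotonicity of \(s\mapsto\Upsilon(\sigma)(s)\), and disjointness of the current \(\Upsilon\)-values of still-restorable strings) are fine. But the central claim --- injectivity of \(j\mapsto S_j\) with \(S_j=\Upsilon(\sigma_j)(s_{\sigma_j})\) --- is false, as is the auxiliary claim that \(S_j\neq\varnothing\) for all \(j<k\). The flaw in the latter is that a string \(\sigma_j\) with \(S_j=\varnothing\) remains the computation only \emph{while it is restorable}; it gets marked unrestorable as soon as some earlier \(\sigma_l\) loses an element of its \(\Upsilon\)-value (Item~\ref{it:L_mark_unrestorable}), after which new computations can appear and be recorded. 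Concretely, take \(e=2\): record \(\sigma_1\) with both \(d_{0,4}\) and \(d_{1,2}\) active, so \(S_1=\{0,1\}\) and both agitators are enumerated by \(\beta\); before either is extracted, a new computation \(\sigma_2\) appears, and since both controlled agitators are now in state ``enumerated'' (Item~\ref{it:Upsilon_enumerated}), no \(d^*_{\sigma_2,i}\) is created and \(S_2=\varnothing\); then \(R_0\) extracts \(d^*_{\sigma_1,0}\), which marks \(\sigma_2\) unrestorable and changes the computation; the next recorded \(\sigma_3\) again sees \(d_{1,2}\) enumerated and \(d_{0,4}\) not yet active, so \(S_3=\varnothing=S_2\). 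This extends to four recordings with signatures \(\{0,1\},\varnothing,\{0\},\varnothing\). So the signature map cannot carry the count, and the repetition already occurs among empty signatures, where your sketched argument for the unrestorable case has no purchase.

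The paper's proof avoids this by counting recursively in \(e\) rather than labelling strings. Let \(f(k)\) bound \(\abs{\dom\Upsilon}\) for an \(L_k\)-node, and fix the \emph{first} recorded computation \(\sigma\). While \(\abs{\Upsilon(\sigma)(s)}=k\), Lemma~\ref{lem:Upsilon_disjoint} shows that the strings recorded during that phase can only interact with the remaining \(e-k\) of the \(R\)-nodes, so the phase contributes at most \(f(e-k)\) recordings; when \(\abs{\Upsilon(\sigma)(s)}\) hits \(0\), \(\sigma\) is restored by Lemma~\ref{lem:L_restored} and nothing further is recorded. Summing as \(k\) descends from \(e\) to \(1\) gives \(f(e)\le 1+f(0)+\cdots+f(e-1)\le 2^e\). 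If you want to salvage a labelling argument, the label attached to \(\sigma_j\) would have to record not its own signature but the entire nested history of which earlier strings are still live and how many threats each retains --- which is exactly what the recursion encodes.
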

\begin{proof}
    We may assume that \(\beta\) is never initialized, as \(\Upsilon\) is canceled whenever initialization occurs. We proceed by induction. Let \(f(k)\) denote the maximum number of \(\dom \Upsilon\) for an \(L_k\)-node. Since there are \(k\) many \(R\)\nbd{}nodes above the \(L_k\)\nbd{}node, the \(L_k\)-node only needs to interact with these \(R\)\nbd{}nodes. We aim to prove that \(f(e)\le 2^e\).

    For the base case, \(f(0)=1=2^0\). Let \(\sigma\) be the first computation found by \(\beta\). While \(\abs{\Upsilon(\sigma)(s)}=k\), the \(L_e\)-node handles at most \(e-k\) many \(R\)\nbd{}nodes (Lemma~\ref{lem:Upsilon_disjoint}), which is analogous to the situation faced by an \(L_{e-k}\)-node. Consequently, Item~\ref{it:L_record} occurs at most \(f(e-k)\) times. As \(k\) ranges from \(e\) down to \(1\) (when \(k=0\), we have \(\sigma\) restored), we obtain
    \[
        f(e)\le 1+f(0)+f(1)+\cdots+f(e-1)\le 1+2^0+2^1+\cdots+2^{e-1}=2^e,
    \]
    where the initial \(1\) in the inequality accounts for the first computation \(\sigma\) found by \(\beta\).
\end{proof}

\begin{lemma}\label{lem:low_satisfied}
    For each \(e\in M\), \(\lim_{s}\Delta(e,s)\) exists, and therefore \(D\) is low.
\end{lemma}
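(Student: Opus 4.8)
The plan is to reduce the lemma to the single claim that, for every $e\in M$, the limit $\lim_{s}\Delta(e,s)$ exists: once that is known, the conclusion that $D$ is low is immediate from the $L_e$\nbd{}requirements and the Limit Lemma, which is available in $I\Sigma_1$ as already noted. So I fix $e$, let $\beta$ be the $L_e$\nbd{}node, and use Lemma~\ref{lem:finite_injury} (together with the bounded\nbd{}injury structure of the construction) to pin down a stage $s_0$ after which $\beta$ is never initialized and, moreover, acts at every stage --- the latter because each $R_i$\nbd{}node with $i<e$ stops a stage (via Item~\ref{it:enumerate}) only boundedly often and each $L_k$\nbd{}node with $k<e$ records a new computation only boundedly often. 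After $s_0$ the local function $\Upsilon=\Upsilon_e$ is never canceled, so by Lemma~\ref{lem:low_bounded} its domain has at most $2^e$ elements; being a bounded $\Sigma_1$ set, $\dom\Upsilon$ is $\mathcal{M}$\nbd{}finite by Lemma~\ref{lem:Friedman} and therefore reaches a final value at some stage $s_1\ge s_0$. In particular Item~\ref{it:L_record} never fires at $\beta$ after $s_1$, so at every (necessarily $\beta$\nbd{}) stage $s>\max(s_0,s_1)$ at which $\Phi_{e,s}^{D_s}(e)\downarrow$, the witnessing computation $\sigma$ already belongs to the fixed $\mathcal{M}$\nbd{}finite set $\dom\Upsilon$ (Item~\ref{it:L_restored} applies, not Item~\ref{it:L_record}).

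Next I split on whether $\Phi_{e}^{D}(e)$ converges. If it does, then since $D$ is d.c.e.\ every initial segment of $D$ undergoes boundedly many changes and hence settles (Lemma~\ref{lem:Friedman}); thus $D_s$ eventually agrees with $D$ below the use of a fixed convergent computation, and together with the monotonicity of the enumeration of $\Phi_e$ this yields $\Phi_{e,s}^{D_s}(e)\downarrow$ for all sufficiently large $s$, so $\lim_{s}\Delta(e,s)=1$. If instead $\Phi_{e}^{D}(e)\uparrow$, I set $u=\max\{\abs{\sigma}:\sigma\in\dom\Upsilon\}$, which is a genuine element of $M$ because $\dom\Upsilon$ is $\mathcal{M}$\nbd{}finite, and let $s^{*}$ be the last stage at which $D\upharpoonright u$ changes --- the set of such stages is $\mathcal{M}$\nbd{}finite by Lemma~\ref{lem:Friedman}. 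For any $s>\max(s_0,s_1,s^{*})$, which is a $\beta$\nbd{}stage, I claim $\Phi_{e,s}^{D_s}(e)\uparrow$: otherwise its witnessing computation $\sigma$ lies in $\dom\Upsilon$, so $\abs{\sigma}\le u$ and $D_s\upharpoonright\abs{\sigma}=\sigma$, whereas $D_s\upharpoonright u$ has been constant and equal to $D\upharpoonright u$ since stage $s^{*}$, and $D\upharpoonright u$ disagrees with $\sigma$ because $\Phi_{e}^{D}(e)\uparrow$ forces $\sigma\not\subseteq D$ --- a contradiction. Hence $\Delta(e,s)=0$ for all large $s$, so $\lim_{s}\Delta(e,s)=0$. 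Either way the limit exists.

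The main obstacle is not this case split but the two facts it leans on. The first --- that $\beta$ eventually acts at every stage --- is exactly the finite\nbd{}injury bookkeeping of the construction, packaged into Lemma~\ref{lem:finite_injury}; it is what lets me replace the unbounded family of convergence stages by an unbounded (indeed cofinite) family of $\beta$\nbd{}stages, which is essential since only at $\beta$\nbd{}stages is a new computation forced into $\dom\Upsilon$. The second --- that $\dom\Upsilon$ is $\mathcal{M}$\nbd{}finite with a final value --- rests on the $2^e$ bound of Lemma~\ref{lem:low_bounded}, and the proof of that bound is the genuine combinatorial core of the section: via Lemma~\ref{lem:L_restored} and Lemma~\ref{lem:Upsilon_disjoint} one checks that, as soon as all outstanding threats to an earlier recorded computation $\sigma$ have been cleared, $\sigma$ is itself reinstated (Item~\ref{it:L_restored}) rather than a fresh computation being recorded, so the recursion over the at most $e$ many relevant $R_i$\nbd{}nodes closes off with the count $1+2^{0}+\cdots+2^{e-1}=2^e$. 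Granting these inputs, the remaining steps --- stabilization of initial segments of the d.c.e.\ set $D$, and the passage from ``$\lim_s\Delta(e,s)$ exists for all $e$'' to ``$D$ is low'' --- are routine in $I\Sigma_1$.
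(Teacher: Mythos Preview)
Your convergent case is fine, but there is a genuine gap in the divergent case, and it lies in the claim that after some stage $s_0$ the node $\beta$ acts at \emph{every} stage. You justify this by saying ``each $R_i$\nbd{}node with $i<e$ stops a stage (via Item~\ref{it:enumerate}) only boundedly often.'' This is false: an $R_i$\nbd{}node fires Item~\ref{it:enumerate} once for each odd $x$ that enters $K$ while $\Gamma_i^{W_i}(x)\downarrow=0$, and since $K$ may be infinite there is no bound on the total number of such firings. (The bounded\nbd{}injury bookkeeping of Lemma~\ref{lem:finite_injury_prelemma} and the proof of Lemma~\ref{lem:R2} bound how often a \emph{fixed} $\Gamma_i^{W_i}(x)$ becomes undefined, not the sum of Item~\ref{it:enumerate} firings over all $x$.) Hence there may be cofinally many non\nbd{}$\beta$\nbd{}stages, and at such a stage a convergent computation $\Phi_{e,s}^{D_s}(e)$ with use exceeding your $u$ is not forced into $\dom\Upsilon$; your contradiction in the divergent case therefore does not cover those stages, and you have not ruled out $\Delta(e,s)=1$ there.

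The paper's proof is terser and does not attempt to argue that every late stage is a $\beta$\nbd{}stage. Instead it leans (implicitly) on the preservation content of Lemma~\ref{lem:L_restored}: once $\dom\Upsilon$ is final, either some recorded $\sigma$ eventually satisfies $\Upsilon(\sigma)(s)=\varnothing$ while remaining restorable, in which case the proof of Lemma~\ref{lem:L_restored} shows $\sigma\subseteq D_t$ for \emph{all} later $t$ (because every agitator $d_{i,x}$ with $x>2(e-i)$ that could injure $\sigma$ was undefined at $s_\sigma$ and later chosen fresh above $\abs{\sigma}$), or no such $\sigma$ exists. To repair your divergent case you need exactly this observation: after the last firing of Item~\ref{it:L_record}, every $d_{i,x}$ with $x>2(e-i)$ held by an $R_i$\nbd{}node above $\beta$ exceeds $u$, so the enumerations that stop the stage at $R_i$ do not disturb $D\upharpoonright u$ at all, and your argument then applies verbatim to non\nbd{}$\beta$\nbd{}stages as well.
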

\begin{proof}
    Let \(\beta\) be the \(L_e\)-node.
    By Lemma~\ref{lem:finite_injury}, there exists some stage~\(s_0\) after which \(\beta\) is never initialized. By Lemma~\ref{lem:low_bounded}, there exists some stage~\(s_1>s_0\) such that either \begin{itemize}
        \item \(\forall s\ge s_1, \Phi_{e,t}^{D_t}(e)\uparrow\) with Item~\ref{it:L_up} happening; or
        \item \(\forall s\ge s_1, \Phi_{e,t}^{D_t}(e)\downarrow\) with Item~\ref{it:L_restored} happening.
    \end{itemize}
    In the first case, we have \(\Delta(e,s)=0\) for all \(s\ge s_1\). In the second case, we have \(\Delta(e,s)=1\) for all \(s\ge s_1\).
\end{proof}
\subsection{Verification}
We need to find an effective bound for the number of times that each \(\alpha\) is initialized.
\begin{definition}
    Let \(f(e)\) denote the maximum number of \(\abs{\Upsilon_e}\), \(g(e)\) denote \(1+\) the number of times initialization occurs to \(L_e\), and \(h(e)\) denote \(1+\) the number of times initialization occurs to \(R_e\). 
\end{definition}
By Lemma~\ref{lem:low_bounded}, \(f(e)\le 2^e\). 

\begin{lemma}\label{lem:finite_injury_prelemma}
    We have the following recurrence relations:
    \[
    \begin{aligned}
        g(0) &= 1, \\
        h(e) &\leq (f(e)+1)g(e), &&\quad e \ge 0, \\
        g(e) &\leq h(e-1)+e,      &&\quad e > 0.
    \end{aligned}
    \]
\end{lemma}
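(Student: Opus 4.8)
\textbf{Proof plan for Lemma~\ref{lem:finite_injury_prelemma}.}

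The plan is to establish the three recurrence relations one at a time by unwinding the definitions of the quantities $f$, $g$, $h$ together with the construction. The base case $g(0)=1$ is immediate: the $L_0$-node $\beta$ sits at the root of $\mathcal{T}$, and above it there are no nodes that could initialize it, so initialization never occurs to $L_0$ and $g(0)=1+0=1$.

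For the bound $h(e)\le (f(e)+1)g(e)$, I would trace through which actions can initialize the $R_e$-node $\alpha$. By the structure of $\mathcal{T}={\{0\}}^{<\omega}$, the only nodes strictly above $\alpha$ are the $L_i$- and $R_i$-nodes with smaller level, and in the construction an $R_i$-node never initializes nodes below it except through its $L$-interaction; the explicit initialization instructions come from the $L$-strategy --- specifically an $L_e$-node $\beta'\subsetneq\alpha$ initializes all nodes below it in Item~\ref{it:L_record} (each time it records a new computation $\sigma$, i.e.\ at most $f(e')$ times between two initializations of $\beta'$) and also whenever $L_{e'}$ itself is initialized. Since the relevant $L$-node immediately above $\alpha$ is the $L_e$-node, between two successive initializations of that $L_e$-node (there are at most $g(e)$ many ``lives'') it performs Item~\ref{it:L_record} at most $f(e)$ times, and each such recording plus the initialization of $L_e$ itself accounts for at most $f(e)+1$ initializations of $\alpha$ per life. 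Multiplying over the $g(e)$ lives gives $h(e)\le (f(e)+1)g(e)$. (One must also check that no other mechanism in the $R$-strategy or $\Delta$-strategy initializes $\alpha$; this is a direct inspection of the instruction lists, since only $L$-nodes issue initialization commands.)

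For $g(e)\le h(e-1)+e$ with $e>0$, I would identify the sources of initialization of the $L_e$-node $\beta$. First, $\beta$ is initialized whenever some node strictly above it acts in a way that issues an initialization; the node immediately above $\beta$ at level $2e-1$ is the $R_{e-1}$-node, and each time \emph{it} is initialized the cascade reaches $\beta$, contributing at most $h(e-1)$. Second, $L_e$ is initialized directly by the enumeration or extraction of small agitators: as stated in the construction, for each $R_i$-node $\alpha$ with $i<e$, the agitators $d_{i,j}$ with $j<2(e-i)$ initialize $L_e$ whenever $D_s(d_{i,j})\neq D_{s-1}(d_{i,j})$. The key point is that by the design $d_{i,2(e-i)}$ is controlled by $\beta$ and not a source of initialization; the finitely many ``small'' indices $j<2(e-i)$, summed appropriately over $i<e$, contribute a bounded amount --- and the claim is that with the careful bookkeeping this total is absorbed into the additive $+e$ term (each of the $e$ nodes $R_0,\dots,R_{e-1}$ above $\beta$ contributes one unit beyond what the $h(e-1)$ cascade already counts). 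So $g(e)\le h(e-1)+e$.

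The main obstacle I anticipate is the third relation: one has to argue that the ``small agitator'' injuries to $L_e$ coming from $d_{i,j}$ with $j<2(e-i)$ really do contribute only $O(e)$ and are not already being double-counted in $h(e-1)$, and this requires pinning down exactly which agitator changes are triggered by initializations that have already propagated down to $\beta$ versus which are genuinely new events. The cleanest route is probably to observe that once all nodes above $\beta$ have settled (after the last initialization counted by $h(e-1)$), each $R_i$-node with $i<e$ can move the relevant small agitators at most once more before reaching its final configuration --- because once $R_i$ is no longer initialized its agitators $d_{i,j}$ for the finitely many small $j$ stabilize --- giving the extra $+e$. I would present the three relations with $f(e)\le 2^e$ from Lemma~\ref{lem:low_bounded} already in hand, so that the subsequent (presumably next) lemma can solve the recurrence to get an explicit effective bound on $h(e)$ and hence conclude finite injury.
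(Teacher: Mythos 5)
Your treatment of the first two relations matches the paper's proof: \(g(0)=1\) because nothing lies above the \(L_0\)-node, and within each of the \(g(e)\) ``lives'' of the \(L_e\)-node, Item~\ref{it:L_record} occurs at most \(f(e)\) times, each occurrence initializing \(R_e\), which yields \(h(e)\le (f(e)+1)g(e)\). Those parts are fine.

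The third relation is where you have a genuine gap, and your proposed fix for the obstacle you (correctly) identified does not work. You argue that once \(R_i\) is no longer initialized, its small agitators \(d_{i,j}\) with \(j<2(e-i)\) ``stabilize'' and can move at most once more, giving the extra \(+e\). This is false: the agitators of an \(R_i\)-node keep being defined, activated, enumerated and extracted even after \(R_i\) itself is never again initialized. Enumeration of \(d_{i,j}\) is triggered either by \(j\) entering \(K\) (Item~\ref{it:enumerate}) or, for \(j=2(k-i)\), by the \(L_k\)-node recording a new computation (Item~\ref{it:Upsilon_active}), and extraction follows at the next expansionary stage (Item~\ref{it:extract}); none of these events is tied to initializations of \(R_i\), so no stabilization of the small agitators can be inferred. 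The paper's actual argument is a parity split on \(j\): if \(j\) is even, then \(d_{i,j}\) is the agitator controlled by some \(L_k\)-node with \(k\le e-1\), and any change to it is accompanied by an initialization of \(R_{e-1}\), so these events are already counted inside \(h(e-1)\) (there is no double-counting problem because they are the same events, not a separate contribution); if \(j\) is odd, the enumeration of \(d_{i,j}\) is caused by \(j\) entering the fixed, global c.e.\ set \(K\), which happens at most once per \(j\) --- this is what supplies the additive term. Your decomposition into ``cascade from \(R_{e-1}\)'' plus ``separately bounded small-agitator moves'' needs to be replaced by this parity-and-globality argument; without it the bound \(g(e)\le h(e-1)+e\) is not established.
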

\begin{proof}
    It is clear that \(g(0)=1\) as \(L_0\) is never initialized. For the second inequality, while \(L_e\) is not initialized, Item~\ref{it:L_record} happens at most \(f(e)\) times and \(R_e\) is therefore initialized at most \(f(e)\) times. Therefore, \(h(e)\leq (f(e)+1)g(e)\). For the third inequality, there are \(e\) many \(R\)\nbd{}nodes above. For \(R_i\), if \(d_{i,y}\) for \(y<2(e-i)\) is enumerated or extracted, then \(L_e\) is initialized. Note that if \(y\) is even, then \(R_{e-1}\) is initialized. But if \(y\) is odd, then the enumeration is due to a change of \(K(y)\) from \(0\) to \(1\). As \(K\) is global, \(g(e)\leq h(e-1)+e\).
\end{proof}

\begin{lemma}\label{lem:finite_injury}
    For each \(\alpha\in \mathcal{T}\), there exists some stage~\(s_0\) after which \(\alpha\) is never initialized.
\end{lemma}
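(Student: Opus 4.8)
The plan is to derive an explicit numerical bound on the number of initializations of each node from the recurrence relations in Lemma~\ref{lem:finite_injury_prelemma}, and then run the same $I\Sigma_1$ argument as in the proof of Theorem~\ref{theorem:isolated} (Section~\ref{subsec:sigma1}). First I would observe that the three recurrences, together with $f(e)\le 2^e$ from Lemma~\ref{lem:low_bounded}, pin down $g(e)$ and $h(e)$ as $\Sigma_1$-definable (indeed $\Delta_1$, since they are given by a primitive recursion) functions of $e$, and that they admit a crude closed-form upper bound. Indeed, substituting $f(e)\le 2^e$ into $h(e)\le (f(e)+1)g(e)$ gives $h(e)\le (2^e+1)g(e)$, and then $g(e)\le h(e-1)+e\le (2^{e-1}+1)g(e-1)+e$; unwinding this recursion from $g(0)=1$ yields a bound of the form $g(e)\le 2^{O(e^2)}$, and likewise $h(e)\le 2^{O(e^2)}$. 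The point is only that this bound is a total function of $e$ provable to exist in $P^-+I\Sigma_1$ (using Exp, which is available); I would not optimize the constant.

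Next I would set up the counting. Fix a node $\alpha\in\mathcal{T}$ of length $n$, say. Initialization of $\alpha$ is caused only by an action of some node $\beta\subsetneq\alpha$ (an $R$-node enumerating or extracting an agitator, or an $L$-node executing Item~\ref{it:L_record}), or by a change in the global set $K$ below a relevant threshold; each such cause is in turn bounded by the quantities $g(i),h(i)$ for $i<n$ together with finitely many $K$-changes, all of which are bounded by the closed-form estimate above. So the total number of times $\alpha$ is initialized is at most some explicit $b(n)$, a $\Sigma_1$-definable function of $n$. Now I would argue exactly as in Section~\ref{subsec:sigma1}: by $\Sigma_1$-induction, for each $\beta\subsetneq\alpha$ the set of stages at which $\beta$ acts in a way that initializes $\alpha$ is a bounded $\Sigma_1$ set (its size is bounded by $b(n)$, which is the role played by ``$2^e-1$'' in the earlier proof), hence $\mathcal{M}$-finite by Lemma~\ref{lem:Friedman}; the stage-assignment function is $\Sigma_1$, so by $I\Sigma_1$ its range is bounded, giving a stage $s_0$ past which no $\beta\subsetneq\alpha$ ever initializes $\alpha$.

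The main obstacle I anticipate is the bookkeeping in the previous paragraph: one must verify that every possible source of initialization of $\alpha$ is genuinely accounted for by the recurrences of Lemma~\ref{lem:finite_injury_prelemma} — in particular that the $L_e$-strategy's manipulation of the agitators $d_{i,2(e-i)}$ (Items~\ref{it:Upsilon_active}--\ref{it:Upsilon_enumerated}) and the interaction with $K$-changes do not introduce an unbounded feedback loop. This is precisely what Lemmas~\ref{lem:low_bounded}, \ref{lem:Upsilon_disjoint} and \ref{lem:finite_injury_prelemma} are designed to rule out, so the proof should amount to: (i) cite $f(e)\le 2^e$; (ii) solve the recurrence to get a total bound $b(n)$ provably in $I\Sigma_1$; (iii) run the $\Sigma_1$-induction / Friedman-finiteness argument of Section~\ref{subsec:sigma1} with $b(n)$ in place of $2^e-1$. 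The only genuinely delicate point is making sure step (ii) really only uses $I\Sigma_1$ — which it does, since the recurrence is a primitive recursion and $\mathrm{Exp}$ is available, so all of $f,g,h$ and $b$ are $\Delta_1$-definable total functions.
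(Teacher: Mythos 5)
Your proposal is correct and follows essentially the same route as the paper: solve the recurrences of Lemma~\ref{lem:finite_injury_prelemma} (using \(f(e)\le 2^e\)) to get an explicit, effectively bounded estimate on \(g(e)\) and \(h(e)\) — the paper computes \(g(e)\le 2^{e(e+1)/2+e}\) and \(h(e)\le 2^{(e+1)(e+2)/2+e}\), matching your \(2^{O(e^2)}\) — and then invoke the \(\Sigma_1\)-induction/Friedman-finiteness argument of Section~\ref{subsec:sigma1}, exactly as the paper does in the remark following the lemma.
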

\begin{proof}
    Lemma~\ref{lem:finite_injury_prelemma} implies that \(g(e)\le 2 h(e-1)\) and \(h(e)\le 2^{e+1}g(e)\). Then, we have 
    \[
        g(e)\le 2^e \cdot 2^{1+2+\cdots + e}=2^{\frac{e(e+1)}{2}+e}
    \]
    and 
    \[
        h(e)\le 2^e \cdot 2^{1+2+\cdots + e+e+1} = 2^{\frac{(e+1)(e+2)}{2}+e}
    \]
    Since these numbers are effectively bounded, we have the desired result.
\end{proof}

It remains to prove Lemma~\ref{lem:R2}.
\begin{proof}[Proof of Lemma~\ref{lem:R2}]
    We assume that \(R_e\) is not initialized. 
    Fix an \(x\); we estimate the number of times that \(\Gamma_\alpha^{W_e}(x)\) is undefined.
    Equivalently, we estimate the number of times that \(d_{e,y}\) for \(y\le x\) is enumerated into \(D\). 
    Let \(p(x)\) be the number. 
    \(p(0)=0\). 
    Note that each time \(\Gamma_\alpha^{W_e}(y)\) with \(y<x\) is undefined, \(\Gamma_\alpha^{W_e}(x)\) is also undefined. If \(x\) is odd, then \(p(x)\le p(x-1)+1\). If \(x\) is even and \(x=2(k-e)\) for some \(k>e\), then (different versions of) \(d_{e,x}\) can be enumerated into \(D\) for \(L_k\) at most \(2^k\) times. In such a case, \(p(x)\le p(x-1)+2^k=p(x-1)+2^{\frac{x}{2}+e}\). Coarsely estimated, for each \(x\), we have 
    \[
        p(x)\le k + 2^e(1+2+2^2+\cdots +2^k)=2^{e+k+1}-2^{e}+k
    \]
    where \(k=\frac{x}{2}\) if \(x\) is even; \(k=\frac{x-1}{2}\) if \(x\) is odd.
    Therefore, there exists some stage~\(s_0\) after which \(\Gamma_\alpha^{W_e}(x)\) is not initialized.
\end{proof}

Similar to the argument in Section~\ref{subsec:sigma1}, the construction is verifiable in \(P^-+I\Sigma_1\). This completes the proof of Theorem~\ref{theorem:upperisolated}.

\section{Open problems}\label{sec:oq}
In Theorem~\ref{theorem:isolated} and Theorem~\ref{theorem:upperisolated}, we prove over \(P^-+I\Sigma_1\) the existence of an isolated d.c.e.\ degree and an upper isolated d.c.e.\ degree \emph{separately}. Using an infinite injury priority method, Wu~\cite{wu2004} is able to prove the existence of a bi-isolated d.c.e.\ degree. It is then natural to ask
\begin{question}
Does \(P^-+I\Sigma_1\) prove the existence of a bi-isolated d.c.e.\ degree?
\end{question}

Recall from Lemma~\ref{lem:isolate_proper} that over \(I\Sigma_1\), Sacks' Splitting Theorem implies an isolated d.c.e.\ degree must be proper.
In the proof of Theorem~\ref{theorem:upperisolated}, we ensure the d.c.e.\ degree \emph{proper} explicitly with Cooper's requirements~\cite{cooper} and Kontostathis's observation~\cite{kon93}. It would be better if we have an upper density theorem over \(I\Sigma_1\) so that an upper isolated d.c.e.\ degree is automatically proper.
\begin{question}
Is there an upper density theorem for c.e.\ degrees over \(I\Sigma_1\)?
\end{question}

How about the classical Sacks Density Theorem~\cite{sacks1964}? While Groszek, Mytilinaios and Slaman~\cite{groszek1996} showed \(B\Sigma_2\) suffices to prove it, the exact strength remains unclear:

\begin{question}
Does Sacks Density Theorem fail in some \(I\Sigma_1\) model?
\end{question}

\section*{Acknowledgments}
The authors would like to thank Professor Yue Yang for his valuable discussions and insightful suggestions.

\bibliographystyle{plain}
\bibliography{peng.bib}
  
\end{document}